\documentclass[10pt]{amsart}

\usepackage{amsmath,amsfonts,amssymb,dsfont}
\usepackage[usenames,dvipsnames]{color}
\usepackage{ulem}
\usepackage{graphicx, epsfig}
\usepackage{xspace}

\newcommand\R{\mathbb R}
\newcommand{\bE}{{\boldsymbol E}}

\newcommand{\bH}{{\boldsymbol H}}
\newcommand{\bu}{\boldsymbol u}
\newcommand{\bv}{\boldsymbol v}

\newcommand{\opmax}{\mathcal M}

\newcommand\D{\mathcal D}
\newcommand\dist{\operatorname{dist}}
\newcommand\spec{\sigma}
\renewcommand\div{\operatorname{div}}

\newcommand\x{\times}

\newcommand\llll{|\kern-1pt|\kern-1pt|}

\renewcommand{\L}{\mathcal{L}}
\newcommand{\E}{\mathcal{E}}

\newcommand{\dom}{\operatorname{D}}
\newcommand{\ud}{\mathrm{d}}
\renewcommand{\Re}{\operatorname{Re}}

\newcommand{\disc}{\mathrm{disc}}
\newcommand{\ess}{\mathrm{ess}}
\newcommand{\p}{\mathrm{p}}

\newcommand{\vecv}{\underline{v}}

\newcommand{\lstd}{\mathrm{F}}
\newcommand{\cbe}{\mathrm{c}}

\newcommand\1{{\ensuremath {\mathds 1} }}

\theoremstyle{remark}
\newtheorem{remark}{Remark}

\theoremstyle{plain}

\newtheorem{assumption}{Assumption}
\newtheorem{theorem}{Theorem}
\newtheorem{prop}[theorem]{Proposition}
\newtheorem{lemma}[theorem]{Lemma}
\newtheorem{cor}[theorem]{Corollary}

\DeclareMathOperator{\tr}{Tr}

\newcommand\curl{\operatorname{curl}}
\newcommand\nn{\mathbf{n}}

\newcommand\RR{\mathbb R}

\newcommand{\sobol}{\mathcal{H}}

\newcommand{\nea}{\mathfrak{n}}
\newcommand{\dnea}{\mathfrak{d}}

\newcommand{\bomega}{\partial \Omega}       

\newcommand{\zime}{Zimmermann-Mertins\xspace}
\newcommand{\zimandme}{Zimmermann and Mertins\xspace }

\begin{document}

\title[Eigenvalue enclosures]{Eigenvalue enclosures}
\author[G.R. Barrenechea]{Gabriel R. Barrenechea}
\address{Department of Mathematics and Statistics, University of Strathclyde, 26 
Richmond
Street, Glasgow G1 1XH, Scotland} 
\email{gabriel.barrenechea@strath.ac.uk}
\author[L. Boulton]{Lyonell Boulton}
\address{Department of Mathematics and Maxwell Institute for Mathematical
Sciences, Heriot-Watt University, Edinburgh, EH14 4AS, UK}
\email{L.Boulton@hw.ac.uk}
\author[N. Boussaid]{Nabile Boussa{\"\i}d}
\address{D\'epartement de Math\'ematiques, Universit\'e de Franche-Comt\'e,
Besan\c con, France}
\email{nboussai@univ-fcomte.fr}
\keywords{eigenvalue enclosures, spectral pollution,  finite element method,  
Maxwell equation}
\begin{abstract}
This paper is concerned with methods for numerical computation of eigenvalue 
enclosures. We examine in close detail the equivalence between an extension of 
the Lehmann-Maehly-Goerisch method  developed a few years ago by \zimandme, and 
a geometrically motivated method developed more recently by Davies and Plum. We 
extend various previously known results in the theory and establish explicit 
convergence estimates in both settings. The theoretical results are supported by 
two benchmark numerical experiments on the isotropic Maxwell eigenvalue problem.
\end{abstract}

\date{13th February 2014}
\maketitle
\tableofcontents


\section{Introduction}

Below we examine in close detail the equivalence between 
two pollution-free techniques for numerical computation of eigenvalue enclosures 
for general self-adjoint operators: a method considered a few years ago by 
\zimandme \cite{ZM95}, and a method developed more recently by Davies and Plum 
\cite{davies-plum}.  These  turn out to be highly robust and they can be applied 
to a wide variety of settings with minimal implementation difficulties.  

The approach of \zimandme is based on an extension of the 
Lehmann-Maehly-Goerisch method \cite{1985Goerisch,1974Weinberger} and it has 
proved to be highly successful in concrete numerical implementations. These 
include the computation of bounds for eigenvalues of the radially reduced 
magnetohydrodynamics operator \cite{ZM95,BouStr:2011man}, the study of 
complementary eigenvalue bounds for the Helmholtz equation 
\cite{Behnke:2001p2871} and the calculation of sloshing frequencies in the left 
definite case \cite{Behnke:2009p3097}.  

The method of Davies and Plum on the other hand, is based on a notion of 
approximated spectral distance which is highly geometrical in character. Its 
original formulation dates back to 
\cite{Davies:1998p3008,Davies:2000p2935,davies-plum}, and it is yet to be tested 
properly on models of dimension other than one.  
Our main motivation for the analysis conducted below, initiated with the results 
presented in \cite[Section~6]{davies-plum} where it is shown that both 
techniques are equivalent. Below we determine in a more precise manner the 
nature of this equivalence and examine their convergence properties. 

In Section~\ref{basic_method} we extend various canonical results from
\cite{davies-plum}. Notably, we include multiplicity counting 
(propositions~\ref{maybe_useless} and \ref{Prop:Generalization}) and
a description of how eigenfunctions are approximated 
(Proposition~\ref{eigenfunctions}). The method of \zimandme, on the other hand, 
is introduced in Section~\ref{zime_sec}. We derive the 
latter  in a self-contained manner independently from the work \cite{ZM95}. See 
Theorem~\ref{more_useful} and Corollary~\ref{corollary_4}. 

Section~\ref{ConverFn}  addresses the questions of convergence and upper bounds 
for residuals in both methods. 
The main statements in this respect are Theorem~\ref{thm:approx_square}, 
Corollary~\ref{cor_th11} and
Theorem~\ref{convergence}, where we formulate general convergence estimates with 
explicit bounds for a finite group of contiguous eigenvalues. 

Section~\ref{section_maxwell} is devoted to a concrete computational application 
in the spectral pollution regime.
For this purpose, we consider the model of the resonant cavity, for which it has 
been well-documented that nodal elements lead to spurious eigenvalues. 
Remarkably the present approach on nodal elements allows estimation of sharp 
eigenvalue bounds. A companion Comsol Multiphysics v4.3b Livelink code which was 
employed to produce some of the results presented in 
Section~\ref{section_maxwell}  as well as further numerical experiments on this 
model, is available in the appendix.


\section{Approximated local counting functions} \label{basic_method}
 
Let $A:\dom(A)\longrightarrow \mathcal{H}$  be a self-adjoint operator 
acting on a Hilbert space $\mathcal{H}$. Decompose the spectrum of $A$
in the usual fashion, as the union of discrete and  essential spectrum,
$\spec(A)=\spec_\disc(A)\cup \spec_\ess(A)$.  Let $J$ be any Borel subset of
$\R$. The spectral projector associated to $A$ is denoted by 
$\1_{J}(A)=\int_J \ud E_\lambda$. Hence $ \tr \1_{J}(A)=\dim 
\1_{J}(A)\mathcal{H}.$ We  write $\E_J(A)=\oplus_{\lambda\in
J} \ker(A-\lambda)$ with the convention $\E_\lambda(A)=\E_{\{\lambda\}}(A)$.
Generally $\E_J(A)\subseteq \1_{J}(A)\mathcal{H}$, however there is no reason 
for these 
two subspaces to be equal.

Let $t\in\R$.   Let $q_t:\dom(A)\x \dom(A)\longrightarrow \mathbb{C}$ be the 
closed bi-linear form
\begin{equation}\label{def_at}
 q_t(u,w)=\langle (A-t)u,(A-t)w\rangle \qquad \forall u,w\in\dom(A).
\end{equation}
For any $u\in \dom(A)$ we will constantly make use of the following 
$t$-dependant semi-norm, which is a norm if $t$ is not an eigenvalue,
\begin{equation}\label{def_bt}
|u|_t =q_t(u,u)^{1/2}=\|(A-t)u\|. 
\end{equation}
By virtue of the min-max principle, $q_t$ characterizes the spectrum which lies
near the origin of the positive operator $(A-t)^2$. In turn, this gives rise to 
a 
notion of local counting function at $t$ for the spectrum of $A$ as we will see 
next. 

Let 
\begin{align*}
    \dnea_j(t)& =   
    \inf_{ \substack{\dim V=j\\ V\subset \dom(A) }}
    \sup _{u\in V} \frac{|u|_t}{\|u\|} 
\end{align*}
so that $0\leq \dnea_j(t)\leq \dnea_{k}(t)$ for $j<k$. Then $\dnea_{1}(t)$ is 
the
Hausdorff distance from $t$ to $\spec(A)$,
 \begin{equation}
 \dnea_1(t)=\min\{ \lambda\in\spec(A): |\lambda-t|\} = \inf_{u\in 
\dom(A)}\frac{|u|_t}{\|u\|}.
 \end{equation}
Similarly $\dnea_j(t)$ are the distances from $t$ to the $j$th nearest point in
$\spec(A)$ counting multiplicity in a generalized sense. That is, stopping when
the essential spectrum is reached.
Moreover 
\[
      \dnea_{j}(t)=\dnea_{j-1}(t) \iff \left\{ \begin{aligned}  
       \text{either}\qquad   & \dim \E_{[t-\dnea_{j-1}(t),t+\dnea_{j-1}(t)]}(A) 
> j-1 \\
       \text{or} \qquad  &t+\dnea_{j-1}(t)\in \spec_\ess(A) \\
       \text{or}  \qquad &  t-\dnea_{j-1}(t)\in \spec_\ess(A).  \end{aligned}  
\right. 
\] 
 Without further mention, below we will always count spectral points of $A$ 
relative to $t$, 
regarding multiplicities in this generalized sense.

We now show how to extract certified information about $\spec(A)$ in the  
vicinity
of $t$ from the action of $A$ onto finite-dimensional trial subspaces $\L\subset
\dom(A)$, see \cite[Section~3]{Davies:1998p3008}.  For $j\leq n=\dim\L$, let
\begin{equation}   \label{defoffl}
    F_\L^j(t) =\min_{ \substack{\dim V=j\\ V\subset \L }} \max _{u\in V} 
\frac{|u|_t}{\|u\|}. 
\end{equation}
Then $0\leq F_\L^1(t)\leq \ldots \leq F_\L^n(t)$
and    $F_\L^j(t)\geq \dnea_j(t)$ for all $j=1,2,\ldots,n$. Since 
$[t-\dnea_j(t),t+\dnea_j(t)]\subseteq [t-F_\L^j(t),t+F_\L^j(t)]$,
there are at least $j$ spectral points  of $A$ in the segment
$\big[t-F^j_\L(t),t+F^j_\L(t)\big]$ including, possibly,  the essential 
spectrum. That is
\begin{equation} \label{multiplicity}
        \tr \1_{[t-F^j_\L(t),t+F^j_\L(t)]}(A) \geq j \qquad \forall
j=1,\ldots,n.
\end{equation}
Hence $F^j_\L(t)$ is an approximated local counting function for $\spec(A)$.

As a consequence of the triangle inequality, $F_\L^j$ is a Lipschitz continuous 
function such that
\begin{equation}
    |F_\L^j(t)-F_\L^j(s)|\leq |t-s| \qquad  \qquad \forall s,t\in \RR \quad 
\text{ and 
} \quad j=1,\ldots,n.\label{lipschitz_mult}
\end{equation}
Moreover, $F_\L^j(t)$ is the $j$th smallest eigenvalue $\mu$ of the 
non-negative 
weak problem: 
\begin{equation} \label{weak_form}
 \text{find }(\mu, u)\in [0,\infty)\times\L\! \setminus\!\{0\} \quad \text{
such that}\quad q_t(u,v) = \mu^2\langle u,v\rangle
\qquad \forall v\in \L. 
\end{equation}
Hence 
\begin{equation}   \label{defoffmaxmin}
    F_\L^j(t) =\max_{ \substack{\dim V=j-1\\ V\subset \L }} \min _{u\in 
\L\ominus
V} \frac{|u|_t}{\|u\|}=\max_{ \substack{\dim V=j-1\\ V\subset \mathcal{H}
}} \min _{u\in \L\ominus V} \frac{|u|_t}{\|u\|}. 
\end{equation}

\subsection{Optimal setting for detection of the spectrum}
As we show next, it is possible to detect the spectrum of $A$ to the left/right 
of $t$ by means 
of $F_\L^j$ in an optimal setting. 
This turns out to be a crucial ingredient in the formulation of the strategy 
proposed in
\cite{Davies:1998p3008,Davies:2000p2935,davies-plum}.  

The following notation simplifies various statements below. Let
\begin{align*}
   \nea^-_j(t)&=\sup \{s<t:\tr \1_{(s,t]}(A)\geq j\} \quad \text{and} \\
   \nea^+_j(t)&=\inf \{s>t:\tr \1_{[t,s)}(A)\geq j\}.
\end{align*} 
Then $\nea^\mp_j(t)$ is the $j$th point in $\spec(A)$ to the 
left$(-)$/right$(+)$ of $t$ counting multiplicities. 
Here $t\in \spec(A)$ is allowed and neither $t$ nor $\nea_1^{\mp}(t)$ have to 
be 
isolated from the rest of $\spec(A)$. 
Note that $\nea^-_j(t)=-\infty$ for $\tr \1_{(-\infty,t]}(A)<
j$ and $\nea^+_j(t)=+\infty$ for $\tr \1_{[t,+\infty)}(A)<
j$. Without further mention, all statements below regarding bounds on 
$\nea_j^\mp(t)$ will be void (hence redundant)  in either of these two cases.

\begin{prop}   \label{maybe_useless} 
Let $t^-<t<t^+$.  Then
\begin{equation} \label{enclosure_mult}
\begin{aligned}
F_\L^j(t^-)\leq t-t^- &\qquad \Rightarrow \qquad t^- - F^j_\L(t^-)\leq 
\nea_j^-(t) \\
F_\L^j(t^+)\leq t^+-t &\qquad \Rightarrow \qquad t^+ + F^j_\L(t^+)\geq 
\nea_j^+(t).
\end{aligned}
\end{equation}
Moreover, let $t_1^-<t_2^-<t<t_2^+<t_1^+$.  Then
\begin{equation} \label{inductive_step_mult}
\begin{aligned}
F^j_\L(t_{i}^-)\leq t-t_i^-\;\textrm{for}\;i=1,2 &\quad \Rightarrow \quad t_1^- 
-
F^j_\L(t_1^-)\leq t_2^- - F^j_\L(t_2^-)\leq \nea_j^-(t) \\
F^j_\L(t_i^+)\leq t_i^+-t \;\textrm{for}\;i=1,2&\quad \Rightarrow \quad t_1^+ + 
F^j_\L(t_1^+)\geq
t_2^+ + F^j_\L(t_2^+)\geq \nea_j^+(t).
\end{aligned}
\end{equation}
\end{prop}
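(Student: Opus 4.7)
The plan is to reduce both parts to the multiplicity bound \eqref{multiplicity} and the Lipschitz estimate \eqref{lipschitz_mult}; only the case of $\nea_j^-(t)$ is treated, since the right/left analogues are symmetric.

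For the first implication in \eqref{enclosure_mult}, I would fix $t^- < t$ and set $\beta = t^- - F_\L^j(t^-)$. By \eqref{multiplicity}, the closed interval $[\beta,\,t^- + F_\L^j(t^-)]$ contains at least $j$ spectral points of $A$ (counted in the generalized sense). The hypothesis $F_\L^j(t^-)\leq t-t^-$ gives $t^- + F_\L^j(t^-)\leq t$, so this interval is contained in $[\beta, t]$. Hence for every $\varepsilon>0$ the half-open interval $(\beta-\varepsilon,\,t]$ also contains at least $j$ spectral points, which means $\tr \1_{(\beta-\varepsilon,t]}(A)\geq j$. Since $\beta-\varepsilon<t$, the definition of $\nea_j^-(t)$ as a supremum yields $\beta-\varepsilon\leq \nea_j^-(t)$; letting $\varepsilon\to 0^+$ gives $\beta\leq \nea_j^-(t)$, which is the desired conclusion.

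For \eqref{inductive_step_mult}, the rightmost inequality $t_2^- - F_\L^j(t_2^-)\leq \nea_j^-(t)$ follows at once from the first part applied at $t^-=t_2^-$, since the hypothesis on $t_2^-$ is precisely what \eqref{enclosure_mult} requires. The middle inequality
\[
 t_1^- - F_\L^j(t_1^-) \leq t_2^- - F_\L^j(t_2^-)
\]
is equivalent to $F_\L^j(t_2^-)-F_\L^j(t_1^-)\leq t_2^- - t_1^-$, which is an immediate consequence of the Lipschitz bound \eqref{lipschitz_mult} with $s=t_1^-$. Observe that no further hypothesis on $t_1^-$ than Lipschitz continuity is used for this step, although the hypothesis on $t_1^-$ is still needed to ensure that both $F_\L^j(t_1^-)$ and the lower bound at $t_1^-$ are meaningful in the statement.

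The only real subtlety is bookkeeping between open and closed intervals when passing from \eqref{multiplicity} (which provides information on a closed interval) to the definition of $\nea_j^-(t)$ (which uses half-open intervals $(s,t]$); the approximation $\beta-\varepsilon \to \beta$ handles this cleanly. The symmetric statements about $\nea_j^+(t)$ are obtained by replacing $t^-$ with $t^+$ and reversing inequalities throughout, with no change in structure.
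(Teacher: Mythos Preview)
Your proof is correct and follows essentially the same route as the paper: both parts rest on the multiplicity bound \eqref{multiplicity} and the Lipschitz estimate \eqref{lipschitz_mult}, with the latter rephrased in the paper as monotonicity of $s\mapsto s\pm F_\L^j(s)$. Your $\varepsilon$-argument handling the open/closed interval bookkeeping is a bit more explicit than the paper's direct appeal to the structure of $[\nea_j^-(t),t]$, but the substance is the same.
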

\begin{proof}
We firstly show \eqref{enclosure_mult}. Suppose that $t\geq F_\L^j(t^-)+t^-$. 
Then 
\[
     \tr \1_{[t^- -F^j_\L (t^-),t]}(A)\geq j.
\]
Since $\nea_j^-(t)\leq \ldots \leq \nea_1^-(t)$ are the only spectral points
in the segment $[\nea_j^-(t),t]$, then necessarily
\[\nea^-_j(t)\in  [t^- -F^j_\L (t^-),t].\]  The bottom of 
\eqref{enclosure_mult} 
is shown in a similar fashion.

The second statement follows by observing that the maps $t\mapsto 
t\pm F_\L^j(t)$ are  monotonically increasing as a consequence of  
\eqref{lipschitz_mult}.
\end{proof}

The structure of the trial subspace $\L$ determines 
the existence of $t^\pm$ satisfying the hypothesis in \eqref{enclosure_mult}. 
If 
we expect to 
detect $\spec(A)$ at both sides of $t$, a necessary requirement on $\L$ should 
certainly be the condition 
\begin{equation} \label{mild_cond_L}
     \min_{u\in\L} \frac{\langle Au,u\rangle}{\langle u,u\rangle } <t<\max 
_{u\in\L} \frac{\langle Au,u\rangle}{\langle u,u\rangle }. 
\end{equation}
By virtue of lemmas~\ref{assumption_on_existence} and \ref{one_case_first} 
below, for $j=1$, the left hand side inequality of \eqref{mild_cond_L} 
implies the existence of $t^-$ and the right hand side inequality implies the 
existence of $t^+$, respectively. 

\begin{remark}   \label{op_tis}
From Proposition~\ref{maybe_useless} it follows that optimal lower bounds for 
$\nea_j^-(t)$  
are achieved by finding 
$\hat{t}^-_j\leq t$, the closer point to $t$, such that 
$F_\L^j(\hat{t}^-_j)=t-\hat{t}^-_j$.
Indeed, by virtue of \eqref{inductive_step_mult}, $ t^- - F_\L^j(t^-)\leq 
\hat{t}^-_j - 
F_\L^j(\hat{t}^-_j)\leq
\nea_j^-(t)$ for any other $t^-$ as in \eqref{enclosure_mult}. Similarly, 
optimal upper bounds 
for $\nea_j^+(t)$ are found by analogous means.  This observation will play a 
crucial role in Section~\ref{zime_sec}.
\end{remark}

The main result of this section is Proposition~\ref{maybe_useless}, which is 
central to the hierarchical method for
finding eigenvalue inclusions
examined a few years ago in \cite{Davies:1998p3008,Davies:2000p2935}. For
fixed $\L$ this method leads to bounds for eigenvalues which are far sharper
than those obtained from the obvious idea of estimating local
minima of $F^1_\L(t)$.  From an abstract perspective, 
Proposition~\ref{maybe_useless} provides an
intuitive insight on the mechanism for determining complementary bounds for
eigenvalues (in the left definite case, for example). The method proposed in 
\cite{Davies:1998p3008,Davies:2000p2935,davies-plum} is yet to
be explored more systematically in the practical setting, however in most 
circumstances the technique described in \cite{ZM95} is easier to implement.

\subsection{Geometrical properties of the first approximated counting function}
We now determine further geometrical properties of $F^1_\L$ and its connection 
to the spectral distance.  
Let the Hausdorff distances from $t\in \R$ to $\spec(A)\setminus(-\infty,t]$ 
and $\spec(A)\setminus[t,\infty)$, respectively, be given by
\begin{equation} \label{delta_plus_minus}
\begin{aligned}
\delta^+(t)&=\inf \{\mu-t:\mu\in \spec(A),\,\mu>t\} \qquad \text{and} \\
\delta^-(t)&=\inf \{t-\mu:\mu\in \spec(A),\,\mu<t\}.
\end{aligned}
\end{equation}
In general,  $t-\nea^-_1(t)\leq \delta^-(t)$ and $\nea^+_1(t)-t\leq 
\delta^+(t)$. In fact, 
 $|\nea^\pm_1(t)-t|= \delta^\pm(t)$ for $t\not\in \spec(A)$. However, these 
relations can be strict whenever $t\in \spec(A)$.
Indeed, $\nea_1^+(t)-t=\delta^{+}(t)$ iff there exists a decreasing sequence 
$t_n^+\in \spec(A)$ such that $t_n^+\downarrow t$, whereas 
$t-\nea_1^-(t)=\delta^{-}(t)$ iff there exists an increasing sequence 
$t_n^-\in \spec(A)$ such that $t_n^-\uparrow t$.

An emphasis in distinguishing $|\nea_1^\pm(t)-t|$ from
$\delta^\pm(t)$ seems unnecessary at this stage. However, this distinction in 
the
notation will be justified later on. Without further mention below we write 
$\delta^\pm(t)=\pm \infty$ to indicate 
that either of the sets on the right side of \eqref{delta_plus_minus} is empty.

Let $\lambda\in \spec(A)$ be an isolated point. If there exists a
non-vanishing  $u\in\L\cap \E_\lambda(A)$, then 
 \[
   \frac{|u|_s}{\|u\|}=|\lambda-s|=\dnea_1(s)   \qquad \forall s\in 
\left[\lambda-\frac{\delta^-(\lambda)}{2},\lambda+\frac{\delta^+(\lambda)}{2}
\right].
\]
According to the convergence analysis carried out in 
Section~\ref{subsec_ConverFn_2}, the smaller the angle between $\L$ and the 
spectral subspace $\E_\lambda(A)$, the closer the $F^1_\L(t)$ is to $\dnea_1(t)$ 
for 
$t\in\big(\lambda-\frac{\delta^-(\lambda)}{2},\lambda+\frac{\delta^+(\lambda)}{2
}\big)$.  
The special case of this angle being zero is described by the following lemma.

\begin{lemma}\label{lemma4}
For $\lambda\in \spec(A)$ isolated from the rest of the spectrum, the 
following statements are equivalent.
\begin{enumerate}
\item \label{no_hijo_0} There exists a minimizer $u\in \L$ of the right side of
\eqref{defoffl} for $j=1$, such that
$|u|_t=\dnea_1(t)$  for  a single $t\in
\big(\lambda-\frac{\delta^-(\lambda)}{2}, 
\lambda+\frac{\delta^+(\lambda)}{2}\big)$,
\item \label{no_hijo_1} $F^1_\L(t)=\dnea_1(t)$ for a single  $t\in
\big(\lambda-\frac{\delta^-(\lambda)}{2}, 
\lambda+\frac{\delta^+(\lambda)}{2}\big)$,
\item \label{no_hijo_2} $F^1_\L(s)=\dnea_1(s)$ for all $s\in
[\lambda-\frac{\delta^-(\lambda)}{2},\lambda+\frac{\delta^+(\lambda)}{2}]$,
\item \label{no_hijo_3} $\L\cap \E_\lambda(A)\not=\{0\}$.
\end{enumerate}
\end{lemma}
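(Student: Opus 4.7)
The plan is to establish the cyclic chain $(d) \Rightarrow (c) \Rightarrow (b) \Rightarrow (a) \Rightarrow (d)$. The first three links are short and all the substance of the lemma is concentrated in the last one, which upgrades a single equality $F_\L^1(t)=\dnea_1(t)$ to the existence of an eigenfunction at $\lambda$ sitting inside $\L$.

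For $(d) \Rightarrow (c)$ I would pick $0 \neq u \in \L \cap \E_\lambda(A)$; since $(A-s)u = (\lambda-s)u$ for every $s\in\mathbb{R}$, one has $|u|_s/\|u\| = |\lambda-s|$. The definition of $\delta^{\pm}(\lambda)$ guarantees that for $s\in [\lambda-\delta^-(\lambda)/2,\lambda+\delta^+(\lambda)/2]$ the point $\lambda$ is the closest spectral point to $s$, so $\dnea_1(s)=|s-\lambda|$, and the sandwich $\dnea_1(s)\leq F_\L^1(s)\leq |u|_s/\|u\|$ forces equality. Then $(c) \Rightarrow (b)$ is immediate by specialising to any interior $t$, and $(b) \Rightarrow (a)$ holds because the infimum defining $F_\L^1(t)$ in \eqref{defoffl} is the minimum of a continuous function on the compact unit sphere of the finite-dimensional space $\L$, hence it is attained.

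The work concentrates on $(a) \Rightarrow (d)$. Let $u \in \L$ with $\|u\|=1$ be a minimizer satisfying $|u|_t = \dnea_1(t) = |t-\lambda|$ at a single $t$ in the open interval. I would split $u = P_\lambda u + v$ using the spectral projector $P_\lambda = \1_{\{\lambda\}}(A)$, so that $v$ is spectrally supported in $\spec(A)\setminus\{\lambda\}$. Orthogonality together with the spectral theorem yields
\[
|u|_t^2 = (t-\lambda)^2 \|P_\lambda u\|^2 + \int_{\mu\neq \lambda}(\mu-t)^2\,\ud\|E_\mu u\|^2.
\]
A short case analysis, comparing $t \in (\lambda-\delta^-(\lambda)/2,\lambda+\delta^+(\lambda)/2)$ against the alternatives $\mu\geq \lambda+\delta^+(\lambda)$ and $\mu\leq \lambda-\delta^-(\lambda)$, shows that $|\mu-t|\geq \rho$ holds for a uniform constant $\rho > |t-\lambda|$. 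Therefore $|u|_t^2 \geq (t-\lambda)^2\|P_\lambda u\|^2 + \rho^2\|v\|^2$, while the hypothesis rewrites as $|u|_t^2 = (t-\lambda)^2(\|P_\lambda u\|^2+\|v\|^2)$. Subtracting gives $(\rho^2-(t-\lambda)^2)\|v\|^2\leq 0$, forcing $v=0$, and hence $u \in \L\cap \E_\lambda(A)\setminus\{0\}$.

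The main obstacle is the strict gap $|\mu-t|>|t-\lambda|$ underpinning this last step; this is precisely what the restriction to the open half-gap interval purchases, and it explains why (a) and (b) require only a single interior $t$ while (c) may be asserted on the closed interval. The boundary case $t=\lambda$ (where $\dnea_1(t)=0$) is handled separately, since then $|u|_t=0$ means $(A-\lambda)u=0$ directly, so $u \in \E_\lambda(A)\setminus\{0\}$.
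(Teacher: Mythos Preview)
Your argument is correct and follows the same route as the paper: the chain $(d)\Rightarrow(c)\Rightarrow(b)\Leftrightarrow(a)$ is disposed of quickly, and the substantive implication $(a)\Rightarrow(d)$ is obtained from the variational characterisation of the bottom of the spectrum of $(A-t)^2$. The only difference is cosmetic: where the paper invokes the Rayleigh--Ritz principle as a black box, you unpack it explicitly via the spectral decomposition $u=P_\lambda u+v$ and the strict gap $|\mu-t|>|t-\lambda|$ for $\mu\in\spec(A)\setminus\{\lambda\}$ on the open half-gap interval.
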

\begin{proof} 
 Since $\L$ is finite-dimensional, \ref{no_hijo_0} and
\ref{no_hijo_1} are equivalent by the definitions of $\dnea_1(t)$, $F^1_\L(t)$
and $q_t$. From the paragraph above the statement of the lemma it is clear that
\ref{no_hijo_3} $\Rightarrow$ \ref{no_hijo_2} $\Rightarrow$ \ref{no_hijo_1}.
Since $|u|_t/ \|u\|$ is the square root of the Rayleigh
quotient associated to the operator $(A-t)^2$, the fact that $\lambda$ is 
isolated combined with the
Rayleigh-Ritz principle, gives the implication
\ref{no_hijo_0}$\Rightarrow$\ref{no_hijo_3}. 
\end{proof}

As there can be a mixing of eigenspaces, it is not possible to 
replace \ref{no_hijo_1} in this lemma by an analogous statement including  
$t=\lambda\pm \frac{\delta^\pm(\lambda)}{2}$. If 
$\lambda'=\lambda+\delta^+(\lambda)$ is an eigenvalue, for example, then
$F^1_\L\left(\frac{\lambda+\lambda'}{2}\right)=\dnea_1\left(\frac{
\lambda+\lambda'}{2}\right)$ ensures that $\L$ contains elements of 
$\E_\lambda(A) \oplus\E_{\lambda'}(A)$. However it is not guaranteed to be 
orthogonal to either of these two  subspaces. 

\subsection{Geometrical properties of the subsequent approximated counting 
functions}

Various extensions of Lemma~\ref{lemma4} to  the case $j>1$ are possible, 
however it is difficult to write these results in a neat fashion. The 
proposition below is one such an extension.

The following generalization of Danskin's Theorem is a direct consequence of 
\cite[Theorem
D1]{BernhardRapaport}. Let $J\subset \R$ be an open segment. Denote by
\[
 \partial_{t}^{\pm} f(t)= \lim_{\tau \to 0^+}\pm\frac{f(t\pm \tau)-f(t)}{\tau},
\]
the one-side derivatives of a function $f:J\longrightarrow \R$. Let 
$\mathcal{V}$ be a compact topological space. For given $\mathcal{J}:J\times 
\mathcal{V}\longrightarrow \R$ we write
\[
      \tilde{\mathcal{J}}(t)=\max_{v\in \mathcal{V}} \mathcal{J}(t,v) \quad 
\text{ and } \quad
      \tilde{\mathcal{V}}(t)=\left\{\tilde{v}\in \mathcal{V}:
     \tilde{\mathcal{J}}(t)=\mathcal{J}(t,\tilde{v})\right\}.
\]

\begin{lemma} \label{Thm:Danskin} 
If the map $\mathcal{J}$ is upper semi-continuous and 
$\partial_{t}^{\pm}\mathcal{J}(t,v)$ exist for all $(t,v)\in J\times 
\mathcal{V}$, then also
$\partial_{t}^{\pm}\tilde{\mathcal{J}}(t)$ exist for all $t\in J$ and 
\begin{equation} \label{4}
     \partial_{t}^{\pm}\tilde{\mathcal{J}}(t)=\max_{\tilde{v}\in 
\tilde{\mathcal{V}}(t)}
     \partial_{t}^{\pm}\tilde{\mathcal{J}}(t,\tilde{v}).
\end{equation}
\end{lemma}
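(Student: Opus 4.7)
The plan is to follow the classical envelope-theorem strategy of Danskin, establishing two matching one-sided bounds for $(\tilde{\mathcal{J}}(t+\tau)-\tilde{\mathcal{J}}(t))/\tau$ as $\tau \to 0^+$; the argument for $\partial_t^-$ is symmetric under reflection of the parameter. A preliminary observation is that $\tilde{\mathcal{V}}(t)$ is a nonempty compact subset of $\mathcal{V}$: nonemptiness comes from the fact that an upper semicontinuous function on a compact space attains its supremum, and closedness from the fact that super-level sets of upper semicontinuous functions are closed.

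For the easy direction, fix any $\tilde{v}\in \tilde{\mathcal{V}}(t)$. Since $\tilde{\mathcal{J}}(t+\tau)\geq \mathcal{J}(t+\tau,\tilde{v})$ while $\tilde{\mathcal{J}}(t)=\mathcal{J}(t,\tilde{v})$, for $\tau>0$ we have
\[
\frac{\tilde{\mathcal{J}}(t+\tau)-\tilde{\mathcal{J}}(t)}{\tau}\;\geq\; \frac{\mathcal{J}(t+\tau,\tilde{v})-\mathcal{J}(t,\tilde{v})}{\tau}.
\]
Passing to $\liminf$ as $\tau\to 0^+$, the right-hand side tends to $\partial_t^+\mathcal{J}(t,\tilde{v})$ by hypothesis, and then taking the supremum over $\tilde{v}\in\tilde{\mathcal{V}}(t)$ gives
\[
\liminf_{\tau\to 0^+}\frac{\tilde{\mathcal{J}}(t+\tau)-\tilde{\mathcal{J}}(t)}{\tau}\;\geq\; \sup_{\tilde{v}\in\tilde{\mathcal{V}}(t)}\partial_t^+\mathcal{J}(t,\tilde{v}).
\]

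For the matching upper bound I would, for each small $\tau>0$, pick some $v_\tau\in \tilde{\mathcal{V}}(t+\tau)$ and, by compactness of $\mathcal{V}$, extract a convergent subnet $v_{\tau_\alpha}\to v_0\in\mathcal{V}$. Joint upper semicontinuity of $\mathcal{J}$ applied to the identity $\mathcal{J}(t+\tau_\alpha,v_{\tau_\alpha})=\tilde{\mathcal{J}}(t+\tau_\alpha)$, together with the definition of $\tilde{\mathcal{J}}$, should force $v_0\in\tilde{\mathcal{V}}(t)$. Using $\tilde{\mathcal{J}}(t)\geq \mathcal{J}(t,v_\tau)$,
\[
\frac{\tilde{\mathcal{J}}(t+\tau)-\tilde{\mathcal{J}}(t)}{\tau}\;\leq\; \frac{\mathcal{J}(t+\tau,v_\tau)-\mathcal{J}(t,v_\tau)}{\tau}.
\]
The main obstacle is to pass to the limit in this quotient along the subnet and identify the result with $\partial_t^+\mathcal{J}(t,v_0)$: pointwise existence of one-sided derivatives only controls difference quotients for fixed $v$, so one must extract uniform control of the difference quotients in a neighbourhood of $(t,v_0)$ from the joint upper semicontinuity of $\mathcal{J}$ on $J\times\mathcal{V}$. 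This uniformization step is precisely the structural ingredient furnished by \cite[Theorem~D1]{BernhardRapaport}. Once it is in hand, combining both bounds yields existence of $\partial_t^+\tilde{\mathcal{J}}(t)$ and the formula \eqref{4}, with the supremum actually attained because $\tilde{\mathcal{V}}(t)$ is compact and $\tilde{v}\mapsto \partial_t^+\mathcal{J}(t,\tilde{v})$ is upper semicontinuous on it under the standing hypotheses.
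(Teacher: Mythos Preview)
The paper does not actually prove this lemma: immediately before the statement it says the result ``is a direct consequence of \cite[Theorem~D1]{BernhardRapaport}'', and no argument is supplied. Your proposal therefore cannot be compared against a proof in the paper; what you have written is a standard Danskin-type envelope sketch, and you yourself defer the decisive step to the same reference the authors cite. In that sense your approach and the paper's coincide: both rely on Bernhard--Rapaport for the substance.

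One point in your sketch deserves care. In the final sentence you assert that the supremum in \eqref{4} is attained because $\tilde{v}\mapsto\partial_t^+\mathcal{J}(t,\tilde{v})$ is upper semicontinuous ``under the standing hypotheses''. The standing hypotheses are only joint upper semicontinuity of $\mathcal{J}$ and pointwise existence of one-sided $t$-derivatives; upper semicontinuity of the derivative map in $v$ does not follow from these alone. If you want a self-contained argument for attainment, you either need an extra regularity hypothesis on $\partial_t^\pm\mathcal{J}$, or you must extract it from the structure exploited in \cite[Theorem~D1]{BernhardRapaport}. Since you already invoke that theorem for the uniformization step, it is cleanest simply to say that both the formula and the attainment of the maximum are consequences of that result, rather than to claim a separate justification that the stated hypotheses do not support.
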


In the statement of this lemma, note that the left and right derivatives of both 
$\mathcal{J}$ and $\tilde{\mathcal{J}}$ might possibly be different.

\begin{prop}\label{Prop:Generalization}
Let $j=1,\ldots,n$ and $t\in\R$ be fixed. The following assertions are 
equivalent.
\begin{enumerate}
\item\label{n1} $|F_\L^j(t)-F_\L^j(s)|= |t-s|$ for some $s\not=t$.
\item\label{n2} There exists an open segment $J\subset \R$ containing $t$ in its 
closure, such that \[|F_\L^j(t)-F_\L^j(s)|= |t-s| \qquad \qquad \forall s\in 
\overline{J}.\] 
\item\label{n3} There exists an open segment $J\subset \R$ containing $t$ in its 
closure, such that
\[\forall s\in J,\text{ either} \quad\L\cap \E_{s+ F_\L^j(s)}\not=\{0\} \quad 
\text{or} \quad \L\cap \E_{s- F_\L^j(s)}(A)\not=\{0\}.\]
\end{enumerate}
\end{prop}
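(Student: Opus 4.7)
I would run the cycle (1)$\Leftrightarrow$(2), (3)$\Rightarrow$(2), (2)$\Rightarrow$(3). The tools are the $1$-Lipschitz bound \eqref{lipschitz_mult} (which also makes $t\mapsto t\pm F_\L^j(t)$ non-decreasing), the finiteness of $\Sigma_\L:=\{\mu\in\R:\L\cap\E_\mu(A)\neq\{0\}\}$, and Lemma~\ref{Thm:Danskin}. The implication (2)$\Rightarrow$(1) is immediate; for (1)$\Rightarrow$(2) the $1$-Lipschitz inequality applied on $[\min(s_0,t),r]$ and $[r,\max(s_0,t)]$ must saturate with a consistent sign for every $r$ strictly between $s_0$ and $t$, yielding $|F_\L^j(t)-F_\L^j(r)|=|t-r|$ on the open segment from $s_0$ to $t$.

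\textbf{Proof of (3)$\Rightarrow$(2).} Hypothesis (3) rewrites as the statement that the open segment $J_0$ is covered by the level sets $C_\mu^\pm:=\{s\in\R:s\pm F_\L^j(s)=\mu\}$ with $\mu\in\Sigma_\L$. These are intervals in $\R$ by monotonicity of $t\pm F_\L^j(t)$, and closed by continuity of $F_\L^j$. Since $\Sigma_\L$ is finite, the pigeonhole principle applied to any sequence of distinct $s_n\in J_0$ with $s_n\to t$ (available because $t\in\overline{J_0}$) places infinitely many $s_n$ in one $C_{\mu_*}^{\pm_*}$; that interval is therefore non-degenerate with $t$ in its closure. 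On its interior $F_\L^j(s)=\pm_*(\mu_*-s)$ is affine of slope $\mp 1$, and the affine relation extends by continuity to the whole closure, giving (2).

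\textbf{Proof of (2)$\Rightarrow$(3).} After shrinking $J$ to one side of $t$, (2) forces $F_\L^j$ to be affine of constant slope $\sigma\in\{-1,+1\}$ on $\overline J$. Fix $s\in J$ and apply Lemma~\ref{Thm:Danskin} to the max--min formula~\eqref{defoffmaxmin},
\[
F_\L^j(s)=\max_{V\in\mathcal V}\mathcal J(s,V),\qquad \mathcal J(s,V):=\min_{u\in\L\ominus V,\,\|u\|=1}|u|_s,
\]
with $\mathcal V$ the compact Grassmannian of $(j-1)$-dimensional subspaces of $\L$. Joint upper semi-continuity of $\mathcal J$ follows from the gap-metric continuity of $V\mapsto\L\ominus V$ together with continuity of $(t,u)\mapsto\|(A-t)u\|$, while one-sided $s$-derivatives exist by analytic perturbation theory for the quadratic self-adjoint pencil $P_{\L\ominus V}(A-s)^2P_{\L\ominus V}^*$. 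The lemma delivers an optimiser $V_*\in\tilde{\mathcal V}(s)$ with $\partial_s^+\mathcal J(s,V_*)=\sigma$; a second invocation of Lemma~\ref{Thm:Danskin}, applied to $-\mathcal J(\cdot,V_*)$ over the unit sphere of $\L\ominus V_*$, then extracts a minimiser $u_*$ with $\partial_s^+|u_*|_s=\sigma$. For any unit $u\in\dom(A)$ the identity
\[
|u|_s^2=(s-\langle Au,u\rangle)^2+\bigl(\|Au\|^2-\langle Au,u\rangle^2\bigr)
\]
shows that a one-sided $s$-derivative of $|u|_s$ has modulus $1$ if and only if the variance $\|Au\|^2-\langle Au,u\rangle^2$ vanishes, i.e., if and only if $u$ is an eigenvector of $A$. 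Writing $u_*\in\E_\mu(A)$, the chain $F_\L^j(s)=\mathcal J(s,V_*)=|u_*|_s=|\mu-s|$ combined with the sign $\sigma=\operatorname{sign}(s-\mu)$ pins $\mu=s-\sigma F_\L^j(s)$, so $u_*$ is a nonzero element of $\L\cap\E_{s-\sigma F_\L^j(s)}(A)$, as required.

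\textbf{Main obstacle.} The delicate step is the double Danskin argument in (2)$\Rightarrow$(3): at the finitely many $s$ where eigenvalues of the pencil $P_\L(A-s)^2P_\L^*$ cross, neither the optimal $V$ nor the optimal $u$ is unique, and one must ensure that the extremal value of the derivative is detected at \emph{some} optimum rather than being averaged out. This is precisely the situation Lemma~\ref{Thm:Danskin} is designed for, and its application once to the outer max and once to the inner min resolves both non-uniquenesses at once.
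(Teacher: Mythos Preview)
Your proof is correct. The cycle $(1)\Leftrightarrow(2)$ and the implication $(2)\Rightarrow(3)$ follow essentially the paper's route: Lipschitz saturation for the first, and a double application of the Danskin lemma (Lemma~\ref{Thm:Danskin}) for the second. Your use of the variance identity $|u|_s^2=(s-\langle Au,u\rangle)^2+(\|Au\|^2-\langle Au,u\rangle^2)$ to detect eigenvectors is equivalent to the paper's Cauchy--Schwarz equality argument, and your Grassmannian parametrisation plays the same role as the paper's chart via $\mathrm{O}(n)$.

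The one genuinely different step is your $(3)\Rightarrow(2)$. The paper closes the loop via $(3)\Rightarrow(1)$ by asserting that, on a subsegment $\tilde J$, one has $F_\L^j(s)=\dnea_j(s)$ and then that $|\dnea_j(s)-\dnea_j(r)|=|s-r|$. Your argument instead exploits the finiteness of $\Sigma_\L=\{\mu:\L\cap\E_\mu(A)\neq\{0\}\}$: hypothesis $(3)$ covers $J$ by the finitely many closed intervals $C_\mu^\pm=\{s:s\pm F_\L^j(s)=\mu\}$, and a pigeonhole on any sequence $s_n\to t$ locates a nondegenerate $C_{\mu_*}^{\pm_*}$ with $t$ in its closure, on which $F_\L^j$ is manifestly affine of slope $\mp 1$. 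This is both shorter and more robust than the paper's route; in particular it avoids the intermediate identity $F_\L^j=\dnea_j$, which need not hold under $(3)$ alone (take $A$ with simple eigenvalues $0,1,10$, $\L=\operatorname{Span}\{\phi_0,\phi_{10}\}$, $j=1$: then $(3)$ holds on $(0,5)$ but $F_\L^1(s)=s\neq|1-s|=\dnea_1(s)$ there). Your finite-cover argument sidesteps this entirely.
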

\begin{proof} \

\underline{\ref{n1} $\Rightarrow$ \ref{n2}}.
Assume \ref{n1}. Since $r\mapsto r\pm F^j_\L(r)$ are continuous and
monotonically increasing, then they have to be constant in the closure of
\[J=\{\tau t+(1-\tau)s:0<\tau< 1\}.\] This is precisely \ref{n2}.

\underline{\ref{n2} $\Rightarrow$ \ref{n3}}. 
Assume \ref{n2}. Then $s\mapsto F^j_\L(s)$ is differentiable in $J$ and its 
one-side derivatives are equal to $1$ or $-1$ in the whole of this interval. For
this part of the proof, we aim at applying \eqref{4}, in order to get another
expression for these derivatives.

Let $\mathcal{F}_j$ be the family of $(j\!-\!1)-$dimensional linear subspaces of 
$\L$.   Identify an orthonormal basis of $\L$ with the canonical basis of 
$\mathbb{C}^n$. Then any other orthonormal basis of $\L$ is represented by a 
matrix in $\mathrm{O}(n)$, the orthonormal group. By picking the first 
$(j\!-\!1)$ columns of these matrices, we cover all possible subspaces $V\in 
\mathcal{F}_j$. Indeed we just have to identify $(\vecv_1|\ldots|\vecv_{j-1})$ 
for $[\vecv_{kl}]_{kl=1}^n \in \mathrm{O}(n)$ with  
$V=\mathrm{Span}\{\vecv_k\}_{k=1}^{j-1}$. 

Let
\[
 \mathcal{K}_j=\Big\{(\vecv_1,\ldots,\vecv_{j-1}): [\vecv_{kl}]_{kl=1}^n \in 
\mathrm{O}(n) \Big\}\subset \underbrace{\mathbb{C}^n\times \ldots \times 
\mathbb{C}^n}_{j-1}.
\]
Then $\mathcal{K}_j$ is a compact subset in the product topology of the right 
hand side. According to \eqref{defoffmaxmin},
\[
    F^j_{\L}(s)=\max_{(\vecv_1,\ldots,\vecv_{j-1})\in \mathcal{K}_j} 
g(s;\vecv_1,\ldots,\vecv_{j-1})
\]
where
\[
 g(s;\vecv_1,\ldots,\vecv_{j-1})=\min_{\substack{(a_1,\ldots,a_{j-1})\in 
\mathbb{C}^{j-1}\\ \sum |a_k|^2=1}} \left|\sum a_k \tilde{v}_k \right|_s.
\]
Here we have used the correspondence between $\vecv_k\in\mathbb{C}^{n}$ and 
$\tilde{v}_k\in \L$ in the orthonormal basis set above. We write 
\[g(r,V)=g(r;\vecv_1,\ldots,\vecv_{j-1}) \quad \text{for} \quad 
V=\mathrm{Span}\{\tilde{\vecv}_k\}_{k=1}^{j-1} \in \mathcal{F}_j.
\]

The map $g:J\times\mathcal{K}_j\longrightarrow \R^+$ is the minimum of a 
differentiable function, so the hypotheses of Lemma~\ref{Thm:Danskin} are 
satisfied by $\mathcal{J}=-g$. Hence, by virtue of \eqref{4},
\[
     \partial_{s}^{\pm} g(s,V)=\min_{\substack{u\in
\L\ominus V, \|u\|=1\\ |u|_s=g(s,V)}}
\left(\frac{\Re l_s(u,u)}{|u|_s}\right)\,.
\]
As minima of continuous functions, $g(s,V)$ and $\partial_{s}^{\pm}g(s,V)$ are 
upper
semi-continuous. Therefore, a further application of Lemma~\ref{Thm:Danskin} 
yields 
\begin{align*}
  \partial_{s}^{\pm} F_\L^j(s)&=\max_{\substack{(\vecv_1,\ldots,\vecv_{j-1})\in
\mathcal{K}_j \\ g(s;\vecv_1,\ldots,\vecv_{j-1})=F_\L^j(s)}} \partial_s^\pm
g(s,\vecv_1,\ldots,\vecv_{j-1})\\
&=\max_{\substack{V\in \mathcal{F}_j \\
g(s,V)=F_\L^j(s)}} \min_{\substack{u\in
\L\ominus V, \|u\|=1\\ |u|_s=g(s,V)}}
\left(\frac{\Re l_s(u,u)}{|u|_s}\right).
\end{align*} 
Now, this shows that 
\[
\left|\max_{\substack{V\in \mathcal{F}_j \\
g(s,V)=F_\L^j(s)}} \min_{\substack{u\in
\L\ominus V, \|u\|=1\\ |u|_s=g(s,V)}}
\left(\frac{\Re 
l_s(u,u)}{|u|_s}\right)\right|=  1.
\]
As $\L$ is finite dimensional, there exists a vector
$u\in\L$ satisfying $|u|_s=F_\L^j(s)$ such that 
\[
 \frac{|\Re l_s(u,u)|}{|u|_s}=  1.
\]
Thus $|\Re \langle (A-s)u,u\rangle |=\langle (A-s)u,(A-s)u\rangle = F_\L^j(s)$.
Hence, according to the ``equality'' case in the Cauchy-Schwarz inequality, $u$ 
must be an eigenvector
of $A$ associated with either $s+F_\L^j(s)$ or $s-F_\L^j(s)$. This is precisely 
\ref{n3}.

\underline{\ref{n3} $\Rightarrow$ \ref{n1}}. 
Under the condition \ref{n3}, there exists an open segment $\tilde{J}\subseteq 
J$, possibly smaller, such that 
$t\in\overline{\tilde{J}}$ and $F^j_\L(s)=\dnea_j(s)$ for all $s\in \tilde{J}$.  
As $|\dnea_j(s)-\dnea_j(r)|=|s-r|$, then either \ref{n1} is immediate, or it 
follows by taking $r\to t$.
\end{proof}

As a consequence of this statement, we find the following extension of Proposition~\ref{maybe_useless} for  $t$ an eigenvalue.

\begin{cor} \label{prop_useless_in_spectrum}
Let $t\in \spec(A)$ be an eigenvalue of multiplicity $m$. Let $t^-<t<t^+$. If $\E_t(A)\cap \L = \{0\}$, then
\begin{equation} \label{enclosure_mult_degenerate}
\begin{aligned}
F_\L^j(t^-)\leq t-t^- &\qquad \Rightarrow \qquad t^- - F^j_\L(t^-)\leq 
\nea_{j+m}^-(t) \\
F_\L^j(t^+)\leq t^+-t &\qquad \Rightarrow \qquad t^+ + F^j_\L(t^+)\geq 
\nea_{j+m}^+(t).
\end{aligned}
\end{equation}
\end{cor}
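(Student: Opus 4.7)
My plan is to reduce the corollary to establishing the refined trace bound
\[
\tr \1_{[t^- - F_\L^j(t^-),\,t]}(A) \geq j + m,
\]
which, by the same reasoning used in Proposition~\ref{maybe_useless} to convert trace bounds into one-sided enclosures, immediately yields $t^- - F_\L^j(t^-) \leq \nea_{j+m}^-(t)$. The analogue for $t^+$ is symmetric, so I would only write out the $t^-$ case.

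I would split the argument into two cases according to whether the hypothesis $F_\L^j(t^-) \leq t - t^-$ is strict or an equality. Setting $\alpha = F_\L^j(t^-)$: in the strict case $t^- + \alpha < t$, the bound $\tr \1_{[t^- - \alpha,\,t^- + \alpha]}(A) \geq j$ already supplied by \eqref{multiplicity} combines with $\tr \1_{\{t\}}(A) = m$, using the disjointness of $[t^- - \alpha,\,t^- + \alpha]$ and $(t^- + \alpha,\,t]$, to deliver the desired trace estimate on $[t^- - \alpha,\,t]$; notably the hypothesis $\E_t(A) \cap \L = \{0\}$ is not needed in this sub-case. The boundary case $t^- + \alpha = t$ is where this hypothesis becomes indispensable: I would pick a $j$-dimensional $V \subseteq \L$ with $|v|_{t^-} \leq \alpha \|v\|$ for all $v \in V$ and consider the augmented subspace $\tilde V = V + \E_t(A)$, whose dimension equals exactly $j + m$ because $V \cap \E_t(A) \subseteq \L \cap \E_t(A) = \{0\}$.

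The key computation is then to propagate the Rayleigh quotient bound from $V$ to $\tilde V$. Writing $u = v + w$ with $v \in V$ and $w \in \E_t(A)$, and exploiting $(A - t^-)w = \alpha w$ together with self-adjointness of $A$ to convert the cross term $2(t - t^-)\Re\langle (A - t^-)v,\,w\rangle$ into $2\alpha^2 \Re\langle v, w\rangle$, one obtains
\[
|u|_{t^-}^2 = |v|_{t^-}^2 + 2\alpha^2 \Re\langle v, w\rangle + \alpha^2\|w\|^2 \leq \alpha^2 \|u\|^2.
\]
The min-max principle applied to $(A - t^-)^2$ on the $(j+m)$-dimensional space $\tilde V$ then gives $\tr \1_{[t^- - \alpha,\,t^- + \alpha]}(A) \geq j + m$, which coincides with the desired bound on $[t^- - \alpha,\,t]$ because the two intervals are equal in this case. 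I expect the main obstacle to be spotting that the exact cancellation of cross terms is what makes the augmentation work; the hypothesis $\E_t(A) \cap \L = \{0\}$ is essential precisely because otherwise one cannot form a genuine $(j+m)$-dimensional augmentation from $V$ and $\E_t(A)$.
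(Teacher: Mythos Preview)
Your argument is correct, and in the boundary case it is genuinely different from the paper's. The paper handles the equality case $F_\L^j(t^-)=t-t^-$ by a continuity argument: it invokes Proposition~\ref{Prop:Generalization} (built on Danskin's theorem) to rule out the possibility that $\tau+F_\L^j(\tau)=t$ could persist for some $\tau<t^-$, deduces that the strict case applies for every such $\tau$, and then lets $\tau\uparrow t^-$. Your approach instead stays at the single point $t^-$ and works directly with the min-max characterisation of $\dnea_{j+m}(t^-)$: you augment a minimising $j$-dimensional $V\subset\L$ by the eigenspace $\E_t(A)$ and verify, via the exact identity $|u|_{t^-}^2-\alpha^2\|u\|^2=|v|_{t^-}^2-\alpha^2\|v\|^2$ for $u=v+w$, that the Rayleigh quotient bound propagates to the $(j+m)$-dimensional $\tilde V\subset\dom(A)$.

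Your route is more elementary---it bypasses Proposition~\ref{Prop:Generalization} and Lemma~\ref{Thm:Danskin} entirely---and it makes the role of the hypothesis $\E_t(A)\cap\L=\{0\}$ completely transparent as a dimension count. The paper's route, while heavier, has the virtue of showing that the equality case is in a sense degenerate (it cannot hold on any left neighbourhood of $t^-$), which is additional structural information. Both arguments coincide in the strict sub-case.
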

\begin{proof}
According to \eqref{multiplicity}, 
\[
     \tr \1_{[t^- -F^j_\L (t^-),t^- +F^j_\L (t^-)]}(A)\geq j.
\]
Thus, if $t> F_\L^j(t^-)+t^-$, there is nothing to prove. 

Consider now the case $t=F_\L^j(t^-)+t^-$. If there exists $\tau<t^-$ such that
$t=F_\L^j(\tau)+\tau$, then from Proposition~\ref{Prop:Generalization}
there exists an open segment $J\subset \R$ containing $(\tau,t^-)$ such that
\[\forall s\in J,\text{ either} \quad\L\cap \E_{s+ F_\L^j(s)}\not=\{0\} \quad 
\text{or} \quad \L\cap \E_{s- F_\L^j(s)}(A)\not=\{0\}.\]
From the assumption, only the second alternative takes place, and necessarily
\[
\forall s\in (\tau,t^-),\, s - F_\L^j(s)\in \sigma_\p(A).
\]
Hence, as $s 
- F_\L^j(s)$ is continuous and $\mathcal{H}$ is separable, 
this function should be constant in the segment $(\tau,t^-)$. We also notice 
that due to monotonicity for any $s\in(\tau,t^-)$, 
$s+F_\L^j(s)=t^-$. Hence if $s\in (\tau,t^-)\mapsto s - F_\L^j(s)$ is constant, 
and equal to some value (say $v$), then $s$ is the 
midpoint between $t$ and $v$ for any $s\in (\tau,t^-)$, which is a 
contradiction with the fact that $\tau\neq t^-$. Hence
\[
t> F_\L^j(\tau)+\tau, \quad \forall \tau<t^-
\]
 and so 
\[
 \tau - F_\L^j(\tau)\leq \nea_{j+m}^-(t),
\]
for all $\tau<t^-$. Thus, by continuity, also
\[
 t^- - F_\L^j(t^-)\leq \nea_{j+m}^-(t).
\]

The bottom of 
\eqref{enclosure_mult_degenerate} 
is shown in a similar fashion.
\end{proof}


\subsection{Approximated eigenspaces}
We conclude this section by examining extensions of the implications
\ref{no_hijo_1} $\Rightarrow$ \ref{no_hijo_3} of Lemma \ref{lemma4} into a more
general context. In combination with the results of 
Section~\ref{zime_sec}, the next proposition shows how to obtain certified
information about spectral subspaces.

Here and below $\{u_j^t\}_{j=1}^n\subset \L$ will denote an orthonormal
family of eigenfunctions associated to the eigenvalues $\mu=F^j_\L(t)$ of the
weak problem \eqref{weak_form}.  In a suitable asymptotic regime for $\L$, the
angle between these eigenfunctions and the spectral subspaces  of $|A-t|$ in the
vicinity of the origin is controlled by a residual which is as small as
$\mathcal{O}\left(\sqrt{F_\L^j(t)-\dnea_j(t)}\right)$ for
$F_\L^j(t)-\dnea_j(t)\to 0$.  

\begin{assumption}   \label{asu1}
Unless otherwise specified, from now on we will always fix the parameter
$m\leq n=\dim\L$ and suppose that
\begin{equation}   \label{cond_J}
[t-\dnea_m(t),t+\dnea_m(t)]\cap \spec(A)\subseteq \spec_\disc(A).
\end{equation}
\end{assumption}
Set
\[   
\delta_j(t)=\dist\left[t,\spec(A)\setminus\left\{t\pm\dnea_k(t)\right\}_{k=1}
^j\right].
\]
By virtue of \eqref{cond_J}, $\delta_j(t)> \dnea_j(t)$ for  all $j\leq m$. 

\begin{remark}   \label{on_eves}
If $t=\frac{\nea_j^-(t)+\nea_j^+(t)}{2}$ for a given $j$, the vectors $\phi_j^t$ 
introduced in Proposition~\ref{eigenfunctions} and invoked subsequently, might 
not be eigenvectors of $A$ despite of the fact that $|A-t|\phi_j^t=\dnea_j(t) 
\phi_j^t$. 
However, in any other circumstance $\phi_j^t$ are eigenvectors of $A$.
\end{remark}

\begin{prop} \label{eigenfunctions}
Let $t\in \R$ and $j\in\{1,\ldots,m\}$. Assume that
$F_\L^j(t)-\dnea_j(t)$ is small enough so that $0<\varepsilon_j<1$ holds true
for the residuals constructed inductively as follows,
\begin{gather*}
\varepsilon_1=\sqrt{\frac{F_\L^1(t)^2-\dnea_1(t)^2}{\delta_1(t)^2-\dnea_1(t)^2}
}  \\   
\varepsilon_j=\sqrt{\frac{F^j_\L(t)^2-\dnea_j(t)^2}{\delta_j(t)^2-\dnea_j(t)^2}
    +\sum_{k=1}^{j-1}
    \frac{\varepsilon^2_k}{1-\varepsilon_k^2}   
\left(1+\frac{\dnea_j(t)^2-\dnea_k(t)^2}{\delta_j(t)^2-\dnea_j(t)^2}\right)}.
\end{gather*}  
Then, there exists an orthonormal basis $\{\phi_j^t\}_{j=1}^m$ of
$\E_{{[t-\dnea_m(t),t+\dnea_m(t)]}}(A)$
such that $\phi_j^t\in\E_{\{t- \dnea_j(t),t+ \dnea_j(t)\}}(A)$,
\begin{gather} \label{ubefu1}
\|u_j^t-\langle u_j^t,\phi_j^t\rangle \phi_j^{t}\|\leq \varepsilon_j \qquad
\text{and} \\ 
\label{ubefu2}
    \qquad |u_j^t-\langle u_j^t,\phi_j^t\rangle \phi_j^t|_t \leq
\sqrt{F^j_\L(t)^2-\dnea_j(t)^2+ \dnea_j(t)^2\varepsilon_j^2}.
\end{gather}
\end{prop}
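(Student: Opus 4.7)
The plan is to proceed by induction on $j$. Throughout, write $\Pi_j = \1_{\{t\pm\dnea_k(t)\}_{k=1}^j}(A)$ for the spectral projection onto the first $j$ eigenspaces of $A$ grouped by distance from $t$; the definition of $\delta_j(t)$ then supplies the lower bound $\|(A-t)v\|^2 \geq \delta_j(t)^2 \|v\|^2$ valid on $(I-\Pi_j)\mathcal{H}$. Combined with $F_\L^j(t)^2 = \|(A-t)u_j^t\|^2$ and an orthogonal decomposition of $(A-t)u_j^t$ into contributions from each distinct eigenvalue group inside $\Pi_j$ plus the $(I-\Pi_j)$-part, this will supply all the control needed in each step.

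For the base case $j=1$, setting $P_1 = \1_{\{t\pm\dnea_1(t)\}}(A)$, the identity $F_\L^1(t)^2 = \dnea_1(t)^2 \|P_1 u_1^t\|^2 + \|(A-t)(I-P_1)u_1^t\|^2$ together with the $\delta_1$-bound yields $\|(I-P_1)u_1^t\|^2 \leq \varepsilon_1^2$. The hypothesis $\varepsilon_1 < 1$ then makes $\phi_1^t := P_1 u_1^t/\|P_1 u_1^t\| \in \E_{\{t\pm\dnea_1(t)\}}(A)$ well-defined, and both \eqref{ubefu1} and \eqref{ubefu2} follow directly.

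For the inductive step, assume $\phi_1^t,\ldots,\phi_{j-1}^t$ are already constructed. First, the orthogonality $\langle u_j^t,u_k^t\rangle = 0$ in $\L$, combined with $\|u_k^t - \langle u_k^t,\phi_k^t\rangle \phi_k^t\|\leq\varepsilon_k$ and the resulting $|\langle u_k^t,\phi_k^t\rangle|^2\geq 1-\varepsilon_k^2$, will give the key estimate $|\langle u_j^t,\phi_k^t\rangle|\leq \varepsilon_k/\sqrt{1-\varepsilon_k^2}$ for every $k<j$. Applying the spectral identity for $F_\L^j(t)^2$, and observing that for every distinct eigenvalue group at distance $\lambda<\dnea_j(t)$ the associated eigenspace is already exhausted by $\{\phi_k^t : k<j,\;\dnea_k(t)=\lambda\}$, a short computation then yields
\[
\|(I-\Pi_j)u_j^t\|^2 \leq \frac{F_\L^j(t)^2-\dnea_j(t)^2}{\delta_j(t)^2-\dnea_j(t)^2} + \sum_{k<j}\frac{\dnea_j(t)^2-\dnea_k(t)^2}{\delta_j(t)^2-\dnea_j(t)^2}\cdot\frac{\varepsilon_k^2}{1-\varepsilon_k^2} = \varepsilon_j^2 - \sum_{k<j}\frac{\varepsilon_k^2}{1-\varepsilon_k^2}.
\]

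To close the induction, I would define $\phi_j^t$ as the unit vector spanning the one-dimensional subspace $\E_{\{t\pm\dnea_j(t)\}}(A) \ominus \mathrm{Span}\{\phi_k^t : k<j,\;\dnea_k(t)=\dnea_j(t)\}$ (whose dimension is exactly one by the generalized multiplicity convention), with phase chosen so that $\alpha := \langle u_j^t,\phi_j^t\rangle \geq 0$. Collecting the three previous estimates then yields $|\alpha|^2 = 1 - \|(I-\Pi_j)u_j^t\|^2 - \sum_{k<j}|\langle u_j^t,\phi_k^t\rangle|^2 \geq 1-\varepsilon_j^2$, which gives \eqref{ubefu1}; finally \eqref{ubefu2} comes from the clean identity $\|(A-t)(u_j^t-\alpha\phi_j^t)\|^2 = F_\L^j(t)^2 - \dnea_j(t)^2|\alpha|^2$, whose derivation only uses $(A-t)^2\phi_j^t = \dnea_j(t)^2\phi_j^t$ (valid even when $\phi_j^t$ is not an $A$-eigenvector, cf.~Remark~\ref{on_eves}). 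The main obstacle is the bookkeeping in the degenerate case $\dnea_k(t) = \dnea_j(t)$ for some $k<j$: the Gram--Schmidt-like construction of $\phi_j^t$ must be arranged so that $\phi_j^t$ still lies in the correct spectral subspace, and this is exactly what produces the nonstandard coefficient $\varepsilon_k^2/(1-\varepsilon_k^2)$ in the recursive definition of $\varepsilon_j$.
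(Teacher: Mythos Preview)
Your overall induction scheme is essentially the paper's, and your base case, your derivation of $|\langle u_j^t,\phi_k^t\rangle|\le \varepsilon_k/\sqrt{1-\varepsilon_k^2}$, your bound on $\|(I-\Pi_j)u_j^t\|^2$, and your identity for $|u_j^t-\alpha\phi_j^t|_t^2$ are all correct and match the paper's argument.

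There is, however, one genuine error in the inductive step: the subspace
\[
\mathcal{S}_j:=\E_{\{t\pm\dnea_j(t)\}}(A)\ominus \operatorname{Span}\{\phi_k^t:k<j,\ \dnea_k(t)=\dnea_j(t)\}
\]
is \emph{not} one-dimensional in general. The multiplicity convention only guarantees that the number of indices $k<j$ with $\dnea_k(t)=\dnea_j(t)$ is strictly smaller than $\dim\E_{\{t\pm\dnea_j(t)\}}(A)$; if $\dnea_{j+1}(t)=\dnea_j(t)$ (so there are still more spectral points at that same distance to be accounted for), then $\dim\mathcal{S}_j\ge 2$. In that situation there is no canonical ``the'' unit vector in $\mathcal{S}_j$, and for an arbitrary choice of $\phi_j^t\in\mathcal{S}_j$ your key equality
\[
|\alpha|^2=1-\|(I-\Pi_j)u_j^t\|^2-\sum_{k<j}|\langle u_j^t,\phi_k^t\rangle|^2
\]
fails: the right-hand side equals $\|\Pi_{\mathcal{S}_j}u_j^t\|^2$, whereas $|\alpha|^2=|\langle u_j^t,\phi_j^t\rangle|^2$ can be strictly smaller. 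The fix, and this is exactly what the paper does, is to \emph{define}
\[
\phi_j^t:=\frac{\Pi_{\mathcal{S}_j}u_j^t}{\|\Pi_{\mathcal{S}_j}u_j^t\|},
\]
i.e.\ normalize the orthogonal projection of $u_j^t$ onto $\mathcal{S}_j$ rather than pick an arbitrary direction. With this choice $|\alpha|^2=\|\Pi_{\mathcal{S}_j}u_j^t\|^2$ holds by construction, and then your displayed chain $|\alpha|^2\ge 1-\varepsilon_j^2$ and the rest of the argument go through verbatim.
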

\begin{proof}
 As it is clear from the context, in  this proof we suppress the index $t$ on
top of any vector. We write $\Pi_\mathcal{S}$ to denote the orthogonal
projection onto the subspace $\mathcal{S}$ with respect to the inner product
$\langle \cdot,\cdot \rangle$.

Let us first consider the case $j=1$. Let $\mathcal{S}_1=\E_{\{t-
\dnea_1(t),t+
\dnea_1(t)\}}\!(A)$, and decompose $u_1=\Pi_{\mathcal{S}_1}u_1+u_1^\perp$ where
$u_1^\perp \perp \mathcal{S}_1$. Since $A$ is self-adjoint,
\begin{equation} \label{casej1}
\begin{aligned}    
F^1_\L(t)^2&=\|(A-t)u_1\|^2=\dnea_1(t)^2\|\Pi_{\mathcal{S}_1}
u_1\|^2+\|(A-t)u_1^\perp\|^2.
\end{aligned}
\end{equation}
Hence 
\[
          F^1_\L(t)^2 \geq \dnea_1(t)^2(1-\|u_1^\perp\|^2)+\delta_1(t)^{{2}}
\|u_1^\perp\|^2.
\]
Since $\delta_1(t)> \dnea_1(t)$, clearing from this identity $\|u_1^\perp\|^2$
yields $\|u_1^\perp\| {\le} \varepsilon_1$.
Hence $\|\Pi_{\mathcal{S}_1}u_1\|^2 \geq 1-\varepsilon_1^2>0$. Let
\[
      \phi_1=\frac{1}{\|\Pi_{\mathcal{S}_1}u_1\|} {\Pi_{\mathcal{S}_1}u_1}
 \]
 so that $\|\Pi_{\mathcal{S}_1}u_1\|=|\langle u_1,\phi_1\rangle|$.
Then \eqref{ubefu1} holds immediately and  \eqref{ubefu2} is achieved by
clearing $\|(A-t)u_1^\perp\|^2$ from \eqref{casej1}.

We define the needed basis, and show  \eqref{ubefu1} and \eqref{ubefu2}, for $j$
up to $m$ inductively as follows. Set
\begin{gather*}
     \phi_j=\frac{1}{\|\Pi_{\mathcal{S}_j} u_j\|}\Pi_{\mathcal{S}_j} u_j
 \end{gather*}   
where $\mathcal{S}_j=\E_{\{t- \dnea_j(t),t+ \dnea_j(t)\}}\!(A)\ominus
\operatorname{Span}\{\phi_l\}_1^{j-1}$ and  $\Pi_{\mathcal{S}_j}u_j\not=0$, all
this for  $1\leq j \leq k-1$. 
Assume that \eqref{ubefu1}  and \eqref{ubefu2} hold true for $j$ up to $k-1$.
Define
$\mathcal{S}_k=\E_{\{t- \dnea_k(t),t+ \dnea_k(t)\}}\!(A)\ominus
\operatorname{Span}\{\phi_l\}_1^{k-1}$.
We first show that $\Pi_{\mathcal{S}_k}u_k\not=0$, and so we can define
\begin{equation} \label{defiphi}
     \phi_k=\frac{1}{\|\Pi_{\mathcal{S}_k} u_k\|}\Pi_{\mathcal{S}_k} u_k
 \end{equation}  
ensuring $\phi_k\perp \operatorname{Span}\{\phi_l\}_{l=1}^{k-1}$. After that we 
verify
 the validity of  \eqref{ubefu1} and \eqref{ubefu2} for $j=k$.

Decompose \[u_k=\Pi_{\mathcal{S}_k}u_k+\sum_{l=k-1}^{1} \langle
u_k,\phi_l\rangle \phi_l+u_k^\perp\]
where $u_k^\perp \perp \operatorname{Span}\{\phi_l\}_{l=1}^{k-1}\oplus
\mathcal{S}_k$. 
Then 
\begin{align*}
     F_\L^k(t)^2&= \dnea_k(t)^2\|\Pi_{\mathcal{S}_k}u_k \|^2+\sum_{l=k-1}^{1}
\dnea_l(t)^2|\langle u_k,\phi_l\rangle|^2 + \|(A-t)u_k^\perp\|^2 \\
     & \geq \dnea_k(t)^2 \|\Pi_{\mathcal{S}_k}u_k \|^2 +\sum_{l=k-1}^{1} 
\dnea_l(t)^2 |\langle u_k,\phi_l\rangle|^2 +\delta_k(t)^2\|u_k^\perp \|^2 \\
     &=\dnea_k(t)^2(1-\|u_k^\perp \|^2) + \sum_{l=k-1}^{1}
(\dnea_l(t)^2-\dnea_k(t)^2) |\langle u_k,\phi_l\rangle|^2 
+\delta_k(t)^2\|u_k^\perp \|^2.
\end{align*}
The conclusion  \eqref{ubefu1} up to $k-1$, implies $|\langle u_l,\phi_l\rangle
|^2\geq 1-\varepsilon_l^2$ for
$l=1,\ldots,k-1$. Since $\langle u_k,u_l\rangle=0$ for
$l\not=k$,
\[
    |\langle u_l,\phi_l\rangle | |\langle u_k,\phi_l\rangle |= 
      |\langle u_k,u_l-\langle u_l,\phi_l\rangle \phi_l\rangle |.
\] 
Then, the Cauchy-Schwarz inequality alongside with \eqref{ubefu1} yield
\begin{equation}
\label{casejgen}
    |\langle u_k,\phi_l\rangle |^2 \leq
\frac{\varepsilon_l^2}{1-\varepsilon_l^2}.
\end{equation}
Hence, since $\dnea_l(t)\le \dnea_k(t)$,
\[
     F_\L^k(t)^2 \geq \dnea_{{k}}(t)^2 + \sum_{l=k-1}^{1} 
(\dnea_l(t)^2-\dnea_k(t)^2) \frac{\varepsilon_l^2}{1-\varepsilon_l^2}
+(\delta_k(t)^2-\dnea_k(t)^2)\|u_k^\perp \|^2.
\]
Clearing $\|u_k^\perp \|^2$ from this inequality and combining with the validity
of \eqref{casejgen} and \eqref{ubefu1} up to $k-1$, yields 
$\Pi_{\mathcal{S}_k}u_k\not=0$. 

Let $\phi_k$ be as in \eqref{defiphi}. Then \eqref{ubefu1} is guaranteed for
$j=k$.
On the other hand, \eqref{ubefu1} up to $j=k$, \eqref{casejgen} and the identity
\[
    F^k_\L(t)^2 = \dnea_k(t)^2|\langle u_k,\phi_k\rangle |^2 + \|(A-t) 
(u_k-\langle u_k,\phi_k\rangle \phi_k)\|^2  ,
\]
yield \eqref{ubefu2} up to  $j=k$. \end{proof}


\section{Local bounds for eigenvalues}    \label{zime_sec}

Let  $t\in \R$ and $\L\subset \dom(A)$ be a specified trial subspace as above. 
Recall that $q_t$ is given by \eqref{def_at}.
Let $l_t:\dom(A)\x \dom(A)\longrightarrow \mathbb{C}$ be the (generally not
closed) bi-linear form associated to $(A-t)$,
\[
 l_t(u,w)=\langle (A-t)u,w\rangle \qquad \forall u,w\in\dom(A).
\]
Our next purpose is to characterize the optimal parameters $t^\pm$ in
Proposition~\ref{maybe_useless}  as described in Remark~\ref{op_tis} by means of
the following weak eigenvalue problem,
\begin{equation} \label{emrho}
\begin{aligned}
 &\textrm{find } u\in \L \setminus \{0\} \text{ and } \tau\in \RR  \text{ such
that}\\
 & \tau q_t(u,  v)  =
l_t( u,v)  \qquad \forall v\in \L  .
\end{aligned}
 \tag{Z$_t^{\L}$}
\end{equation}
This problem is central to the method of eigenvalue bounds calculation examined 
in \cite{ZM95}.

Let 
\[
\tau^-_{1}(t)\leq \ldots \leq \tau^-_{n^-}(t)<0 \qquad \text{ and }  \qquad
0<\tau^+_{n^+}(t)\leq \ldots \leq \tau^+_{1}(t),
\]
be the negative and positive eigenvalues of \eqref{emrho} respectively.
Here and below $n^\mp(t)$ is the number of these negative and
positive eigenvalues, which are both locally constant in $t$. 
Below we will denote eigenfunctions associated with $\tau^\mp_{j}(t)$ by
$u_j^\mp(t)$.   

\begin{assumption}  \label{plusminus}
For the purpose of clarity of exposition and without further mention, below we 
write most statements only for the case of ``lower bounds for the eigenvalues of 
$A$ which are to the left of $t$''. As the position of $t$ relative to the 
essential spectrum is irrelevant here, evidently this assumption does not 
restrict generality. The corresponding results regarding ``upper bounds for the 
eigenvalues of $A$ which are to the right of $t$''  can be  recovered by 
replacing $A$ by $-A$. 
\end{assumption}

The left side of the hypotheses \eqref{mild_cond_L} ensures the existence of
$\tau^-_1(t)$.  A more concrete connection with the framework of 
Section~\ref{basic_method} is made precise
in the following lemma. Its proof is straightforward, hence omitted. 

\begin{lemma}    \label{assumption_on_existence} 
The following conditions are equivalent,
\begin{itemize}
\item[{\it a}$^-$)] $F^1_\L(s)>t-s$ for all $s<t$
\item[{\it b}$^-$)] $\frac{\langle Au,u\rangle}{\langle u,u\rangle}>t$ for all
$u\in \L$
\item[{\it c}$^-$)] all the eigenvalues of \eqref{emrho} are positive.
\end{itemize}
\end{lemma}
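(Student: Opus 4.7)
The plan is to establish the three-way equivalence by a short cycle hinging on two elementary observations: a Rayleigh-quotient interpretation of the pencil \eqref{emrho} connecting $(b^-)$ and $(c^-)$, and an algebraic identity relating $\|(A-s)u\|$ to $\|(A-t)u\|$ connecting $(a^-)$ and $(b^-)$.

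For $(b^-)\Leftrightarrow(c^-)$, I would test \eqref{emrho} with $v=u$, obtaining $\tau\,\|(A-t)u\|^{2} = \langle(A-t)u,u\rangle$, so each eigenvalue $\tau$ of the pencil inherits the sign of $\langle Au,u\rangle - t\langle u,u\rangle$. Under the natural non-degeneracy $\L\cap\ker(A-t) = \{0\}$ (which is forced anyway by $(b^-)$, since $(A-t)u=0$ with $u\ne 0$ would give $\langle Au,u\rangle = t\|u\|^{2}$), the form $q_t$ is positive definite on $\L$, so the min-max principle yields
\[
\tau_1^-(t) \;=\; \min_{u\in\L\setminus\{0\}} \frac{l_t(u,u)}{q_t(u,u)}.
\]
Hence every eigenvalue of \eqref{emrho} is positive precisely when $l_t(u,u)>0$ for every $u\in\L\setminus\{0\}$, which is exactly $(b^-)$.

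For $(a^-)\Leftrightarrow(b^-)$, I would exploit the identity
\[
\|(A-s)u\|^{2} - (t-s)^{2}\|u\|^{2} \;=\; \|(A-t)u\|^{2} + 2(t-s)\,\langle(A-t)u,u\rangle,
\]
valid for every $u\in\L$ and $s\in\R$. Condition $(a^-)$ asks precisely that the left-hand side be strictly positive for every $u\in\L\setminus\{0\}$ and every $s<t$. If $(b^-)$ holds, then $\langle(A-t)u,u\rangle>0$ and the right-hand side is a sum of a non-negative and a strictly positive term whenever $t-s>0$, yielding $(a^-)$. Conversely, if there exists $u_0\in\L\setminus\{0\}$ with $\langle(A-t)u_0,u_0\rangle < 0$, then choosing $s<t$ so that $t-s$ exceeds $\|(A-t)u_0\|^{2}/(2|\langle(A-t)u_0,u_0\rangle|)$ makes the right-hand side non-positive at $u_0$, contradicting $(a^-)$.

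The only delicate point, and the main obstacle to a fully ``symbol pushing'' argument, is the knife-edge case $\langle(A-t)u_0,u_0\rangle = 0$ with $(A-t)u_0\neq 0$, where a literal reading of $(a^-)$ at $u_0$ still yields $\|(A-t)u_0\|^{2} > 0$ while the strict form of $(b^-)$ fails. Such a configuration produces a zero generalized eigenvalue of \eqref{emrho}, which is tacitly excluded by the labelling convention ``$\tau_j^-<0<\tau_j^+$'' adopted in the paragraph preceding the lemma. Once this convention is in force, the three conditions become equivalent, which is presumably why the authors consider the proof straightforward enough to omit.
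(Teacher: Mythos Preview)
The paper omits the proof entirely, declaring it ``straightforward''. Your argument is the natural one and is correct in all essentials: the Rayleigh-quotient link between $(b^-)$ and $(c^-)$ via testing with $v=u$, and the algebraic expansion of $\|(A-s)u\|^2$ for $(a^-)\Leftrightarrow(b^-)$, are exactly the expected computations.

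Your identification of the knife-edge case is also on point, and it is a genuine imprecision in the lemma as written rather than something ``tacitly excluded''. A concrete instance: take $A=\operatorname{diag}(0,2)$ on $\mathbb{R}^2$, $t=1$, and $\L=\operatorname{Span}\{e_1+e_2\}$. Then $\langle Au,u\rangle/\|u\|^2=1=t$, so $(b^-)$ fails with equality; the sole eigenvalue of \eqref{emrho} is $\tau=0$, so $(c^-)$ fails; yet $F^1_\L(s)^2-(t-s)^2=1>0$ for every $s$, so $(a^-)$ holds. The labelling convention for $\tau_j^\pm$ does not resolve this: Remark~\ref{remark_on_mult}, which immediately follows the lemma, explicitly allows $n_0(t)>0$. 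The clean statement, under the non-degeneracy $\L\cap\E_t(A)=\{0\}$ later imposed as Assumption~\ref{asu2}, is that $(a^-)$ is equivalent to the \emph{non-strict} version of $(b^-)$ and to $n^-(t)=0$. Since the lemma is used only to motivate Lemma~\ref{one_case_first} and Theorem~\ref{more_useful}, where strict negativity of $\tau_1^-(t)$ is assumed anyway, the discrepancy is harmless for the remainder of the paper.
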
  

\begin{remark}  \label{remark_on_mult}
Let $\L=\operatorname{Span}\{b_j\}_{j=1}^n$. The matrix
$[q_t(b_j,b_k)]_{jk=1}^n$ is singular
if and only if $\E_t(A)\cap \L \neq \{0\}$. 
 On the other hand, the kernel of \eqref{emrho} might be non-empty. If $n_0(t)$
is the dimension
 of this kernel and $n_\infty(t)=\dim(\E_t(A)\cap \L)$, 
 then $n=n_\infty(t)+n_0(t)+n^-(t)+n^+(t)$. 
\end{remark}

\begin{assumption}  \label{asu2}
Note that $n_\infty(t)\geq 1$ if and only if $F_{\L}^j(t)=0$
for $j=1,\ldots,n_{\infty}(t)$. In this case the conclusions of
Lemma~\ref{one_case_first} and Theorem~\ref{more_useful} below become void.
In order to write our statements in a more transparent fashion, without further 
mention from now on we will suppose that 
\begin{equation} \label{easu2}
\L \cap \E_t(A)= \{0\}.
\end{equation}
\end{assumption}

By virtue of the next three results, finding the negative eigenvalues of 
\eqref{emrho}
is equivalent to  finding $s=\hat{t}^-_j \in\RR$ such that
\begin{equation}\label{GLj:fixedpoint} 
t-s= F^j_\L(s),
\end{equation}
and in this case  $\hat{t}^{-}_j=t+\frac{1}{2\tau_j^-(t)}$. 
It then follows from  Remark~\ref{op_tis} that \eqref{emrho}  encodes
information about the optimal bounds for the spectrum around $t$, achievable by
\eqref{inductive_step_mult} in Proposition~\ref{maybe_useless}.

\subsection{The eigenvalue immediately to the left}
We begin with the case $j=1$, see \cite[Theorem~11]{davies-plum}.

\begin{lemma}    \label{one_case_first}
Let $t\in \R$. The smallest eigenvalue  $\tau=\tau_1^-(t)$ of 
\eqref{emrho} is negative if and only if there exists $s<t$ such that
\eqref{GLj:fixedpoint} holds true. In this case $s=t+\frac{1}{2\tau^-_1(t)}$ and 
\[F^1_\L( s
)=-\frac{1}{2\tau^-_1(t)}= \frac{|u_1^-(t)|_s}{\|u_1^-(t)\|}\]
for $u=u^-_1(t)\in \L$ the corresponding eigenvector.
\end{lemma}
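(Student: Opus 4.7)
The plan rests on the algebraic identity obtained by writing $A-s=(A-t)+(t-s)$ in the definition of $q_s$ and using that $l_t$ is Hermitian by self-adjointness of $A$ (so that the two cross terms collapse to $2l_t(u,v)$):
\begin{equation*}
q_s(u,v) = q_t(u,v) + 2(t-s)\,l_t(u,v) + (t-s)^2\langle u,v\rangle \qquad \forall u,v\in\dom(A).
\end{equation*}
Specialised to $u=v$ this gives $|u|_s^2 = q_t(u,u) + 2(t-s)\,l_t(u,u) + (t-s)^2\|u\|^2$, which converts every statement about $F_\L^1(s)$ into one about $q_t$ and $l_t$ alone.

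For the ``$\Rightarrow$'' direction, suppose $s<t$ satisfies $t-s=F_\L^1(s)$. By \eqref{weak_form}, the minimum is attained at some $u\in\L\setminus\{0\}$ solving $q_s(u,v)=(t-s)^2\langle u,v\rangle$ for all $v\in\L$. Substituting the identity above cancels the $(t-s)^2\langle u,v\rangle$ term and leaves $q_t(u,v)+2(t-s)\,l_t(u,v)=0$, i.e. $\tau\,q_t(u,v)=l_t(u,v)$ for all $v\in\L$ with $\tau=1/[2(s-t)]<0$. Hence \eqref{emrho} admits a negative eigenvalue, and in particular the smallest one is negative.

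For the ``$\Leftarrow$'' direction, Assumption~\eqref{easu2} guarantees that $q_t$ is a genuine inner product on $\L$, so \eqref{emrho} is a standard generalised Hermitian eigenvalue problem whose spectrum coincides with the set of critical values of the Rayleigh quotient $v\mapsto l_t(v,v)/q_t(v,v)$, the smallest being $\tau_1^-(t)$ (when negative), attained at $u_1^-(t)$. Set $s=t+1/[2\tau_1^-(t)]<t$. Reversing the computation above gives $q_s(u_1^-(t),v)=(t-s)^2\langle u_1^-(t),v\rangle$, so $(t-s)^2$ is an eigenvalue of \eqref{weak_form} with eigenvector $u_1^-(t)$; in particular $F_\L^1(s)\leq t-s$. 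To obtain equality I have to rule out strictly smaller eigenvalues: for any $v\in\L$,
\begin{equation*}
|v|_s^2-(t-s)^2\|v\|^2 = q_t(v,v)+2(t-s)\,l_t(v,v),
\end{equation*}
which is trivially non-negative when $l_t(v,v)\geq 0$ and otherwise rearranges to $l_t(v,v)/q_t(v,v)\geq -1/[2(t-s)]=\tau_1^-(t)$, which is precisely the variational characterisation of $\tau_1^-(t)$. Hence $F_\L^1(s)^2\geq(t-s)^2$, establishing $F_\L^1(s)=t-s$, together with the identifications $s=t+\tfrac{1}{2\tau_1^-(t)}$, $F_\L^1(s)=-\tfrac{1}{2\tau_1^-(t)}$, and the fact that $u_1^-(t)$ is a minimiser.

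The one real subtlety, and the main technical point, is the bookkeeping needed to match the \emph{smallest} generalised eigenvalue $(t-s)^2$ of \eqref{weak_form} at $s$ with the \emph{most negative} eigenvalue $\tau_1^-(t)$ of \eqref{emrho} at $t$; the Rayleigh quotient argument in the paragraph above is exactly what pins this down. The remainder of the proof is purely algebraic manipulation around the identity for $q_s$.
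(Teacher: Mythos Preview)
Your proof is correct and takes essentially the same route as the paper: both hinge on the identity $q_s(u,u) = q_t(u,u) + 2(t-s)\,l_t(u,u) + (t-s)^2\|u\|^2$ together with the Rayleigh-quotient characterisation $\tau_1^-(t)=\min_{v\in\L} l_t(v,v)/q_t(v,v)$. The only cosmetic difference is that the paper runs the Rayleigh inequality in the forward direction (showing directly that $-1/[2(t-s)]$ is the \emph{smallest} eigenvalue of \eqref{emrho}, hence identifying $s$ uniquely), whereas you run it in the converse and then, as you correctly flag, reuse it to pin down the minimality.
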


\begin{proof} 
For all $u\in \L$ and $s\in\R$,
\begin{align*}
q_s(u,u) - F_\L^1(s)^2\langle u,u \rangle = q_t(u,u) + 2(t-s)l_t(u,u)
+\left((t-s)^2-F^1_\L(s)^2\right) \langle u,u \rangle.
\end{align*}

Suppose that $F^1_\L(s)=t-s$.  Then
\begin{equation*}
q_s(u,u) - F^1_\L(s)^2\langle u,u \rangle = q_t(u,u) +2F^1_\L(s)l_t(u,u).
\end{equation*}
As the left side of this expression is non-negative, 
\[
    \frac{l_t(u,u)}{q_t(u,u)}\geq -\frac{1}{2F^1_\L(s)}
\]
for all $u\in \L\setminus \{0\}$ and the equality holds for some $u\in \L$.
Hence $-\frac{1}{2F^1_\L(s)}$
is the smallest eigenvalue of \eqref{emrho}, and thus necessarily equal to
$\tau_1^-(t)$.
In this case $s-F^1_\L(s)=t-2F^1_\L(s)=t+\frac{1}{\tau_1^-(t)}$. Here the vector
$u$ for which equality is achieved
is exactly $u=u^-_1(t)$.

Conversely, let $\tau^-_1(t)$ and $u^-_1(t)$ be as stated. Then \[
    \tau^-_1(t)\leq  \frac{l_t(u,u)}{q_t(u,u)}
\]
for all $u\in \L$ with equality for $u=u^-_1(t)$.  Re-arranging this expression
yields
\[
   q_t(u,u) - \frac{1}{\tau^-_1(t)} l_t(u,u) \geq 0
\]
for all $u\in \L$ with equality for $u=u^-_1(t)$. The substitution
$t=s-\frac{1}{2\tau^-_1(t)}$ then yields
\[
   q_t(u,u) -\frac{1}{(2\tau_1^-(t))^2} \langle u,u\rangle \geq 0
\]
for all $u\in \L$. The equality holds for $u=u^-_1(t)$. This expression further
re-arranges as
\[
    \frac{|u|_s^2}{\|u\|^2}\geq \frac{1}{(2\tau^-_1(t))^2}.
\]
Hence $F^1_\L(s)^2=\frac{1}{(2\tau_1^-(t))^2}$, as needed.
\end{proof}

\subsection{Further eigenvalues}
An extension to $j\not=1$ is now found by induction.

\begin{theorem}  \label{more_useful} 
Let $t\in \R$ and $1\leq j\leq n$ be fixed.  The number of negative eigenvalues 
$n^-(t)$ in  \eqref{emrho} is
greater than or equal to $j$  if and only if
\[
      \frac{\langle Au,u\rangle}{\langle u,u \rangle}<  t \qquad \text{for some}
\quad u\in
      \L \ominus \operatorname{Span}\{ u^-_1(t),\ldots,u^-_{j-1}(t) \}.
\]
Assuming this holds true, then $\tau=\tau^-_j(t)$  and $u=u_j^-(t)$ are
solutions of \eqref{emrho} if and only if
\[
     F^j_\L\left( t+\frac{1}{2\tau_j^-(t)} \right)=- \frac{1}{2\tau_j^-(t)} = 
     \frac{\left| u^-_j(t)\right|_{t+\frac{1}{2\tau_j^-(t)}}}{\|u^-_j(t)\|}.
\]
\end{theorem}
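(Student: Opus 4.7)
The plan is to mirror the proof of Lemma~\ref{one_case_first} and proceed by induction on $j$, with that lemma serving as the base case $j=1$. In the inductive step I assume that the first $j-1$ negative eigenvectors $u_1^-(t),\ldots,u_{j-1}^-(t)$ of \eqref{emrho} are already in hand, and I handle the two ``if and only if''s in the statement separately.

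For the first equivalence, which characterises the existence of $\tau_j^-(t)$, I would invoke Courant--Fischer applied to the generalized eigenvalue problem \eqref{emrho}. Assumption~\ref{asu2} guarantees that $q_t$ is positive definite on $\L$, so \eqref{emrho} defines a self-adjoint operator on $(\L,q_t)$; its eigenvalues are real and eigenvectors corresponding to distinct eigenvalues are $q_t$-orthogonal. The $j$-th negative eigenvalue therefore exists precisely when the Rayleigh quotient $l_t(u,u)/q_t(u,u)$ takes a strictly negative value on the complement of $\operatorname{Span}\{u_1^-(t),\ldots,u_{j-1}^-(t)\}$ in $\L$; rewriting $l_t(u,u) = \langle Au,u\rangle - t\,\|u\|^2$ translates this into the stated inequality.

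For the fixed-point characterisation I would repeat the algebraic manipulation of Lemma~\ref{one_case_first}. Put $\gamma := -1/(2\tau_j^-(t)) > 0$ and $s_j := t - \gamma$. The identity
\[
q_{s_j}(u,u) - \gamma^2 \langle u,u\rangle \;=\; q_t(u,u) + 2\gamma \, l_t(u,u) \;=:\; B(u,u), \qquad u \in \L,
\]
reduces the computation of $F_\L^j(s_j)$ to the signature analysis of the quadratic form $B$ relative to $\langle \cdot, \cdot \rangle$. Decomposing $u = \sum_i \alpha_i u_i$ in a $q_t$-orthonormal eigenbasis $\{u_i\}$ of \eqref{emrho} yields
\[
B(u,u) \;=\; \sum_i (1 + 2\gamma \tau_i)\,|\alpha_i|^2,
\]
whose coefficients are strictly negative on the directions with $\tau_i < \tau_j^-(t)$ (exactly $j-1$ of them, counted with multiplicity), vanish on the $\tau_j^-(t)$-eigenspace, and are strictly positive on the remaining directions. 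Sylvester's law of inertia transfers this signature to the weak problem \eqref{weak_form} at $s=s_j$, forcing $F_\L^j(s_j) = \gamma$. A direct computation using the eigen-relation $l_t(u_j^-(t),v) = \tau_j^-(t) q_t(u_j^-(t),v)$ then yields $q_{s_j}(u_j^-(t),v) = \gamma^2 \langle u_j^-(t), v\rangle$ for every $v \in \L$, so $u_j^-(t)$ itself realises the value $\gamma = |u_j^-(t)|_{s_j}/\|u_j^-(t)\|$. The converse direction of the equivalence is obtained by reversing the same algebra.

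The main obstacle I anticipate is the careful bookkeeping when $\tau_j^-(t)$ has multiplicity greater than one: the signature count must then see a multi-dimensional null eigenspace of $B$, and the choice of $u_j^-(t)$ inside that degenerate eigenspace must be made compatibly with the complement $\L \ominus \operatorname{Span}\{u_1^-(t),\ldots,u_{j-1}^-(t)\}$ appearing in the first ``iff''. One also has to verify that this choice agrees with the vector distinguished by the first part of the theorem, in order to close the induction.
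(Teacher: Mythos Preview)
Your argument is correct, but it takes a genuinely different route from the paper's. The paper does not unfold the signature of $B=q_{s_j}-\gamma^{2}\langle\cdot,\cdot\rangle$ at all; instead it simply restricts the trial space to $\tilde{\L}=\L\ominus\operatorname{Span}\{u_1^-(t),\ldots,u_{j-1}^-(t)\}$ and invokes Lemma~\ref{one_case_first} once more, observing that the negative eigenvalues of $(\mathrm{Z}^{\tilde{\L}}_t)$ are precisely $\tau_j^-(t),\tau_{j+1}^-(t),\ldots$ by self-adjointness of \eqref{emrho}. That is the entire proof: three lines, with the identification $F^1_{\tilde{\L}}(s_j)=F^j_\L(s_j)$ left implicit.

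Your Sylvester-inertia computation is longer but more explicit, and it buys you something the paper's reduction does not spell out: it shows directly that the form $B$ has exactly the right signature on \emph{all} of $\L$, so the equality $F^j_\L(s_j)=\gamma$ follows without ever having to match the $\langle\cdot,\cdot\rangle$-complement $\tilde{\L}$ against the $q_t$-eigenspace decomposition. In particular, the ``main obstacle'' you anticipate concerning multiplicity of $\tau_j^-(t)$ is already handled by your own argument: if $\tau_j^-(t)$ has multiplicity $m_j$ and coincides with $p$ of the preceding $\tau_i^-$, the coefficients $1+2\gamma\tau_i$ produce exactly $j-1-p$ strictly negative, $m_j$ zero, and the rest strictly positive, so the $j$-th generalized eigenvalue of $q_{s_j}$ relative to $\langle\cdot,\cdot\rangle$ is still $\gamma^2$. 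No extra bookkeeping is needed, and the eigenvector identity $q_{s_j}(u_j^-(t),v)=\gamma^2\langle u_j^-(t),v\rangle$ that you derive pins down the witnessing vector regardless of degeneracy. You may drop the caveat.

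One small inconsistency: you announce an induction on $j$, but your signature argument works for arbitrary $j$ in one shot and never invokes the inductive hypothesis. Either present it as a direct proof, or, if you want to stay closer to the paper, replace the inertia step by the restriction to $\tilde{\L}$ and quote Lemma~\ref{one_case_first}.
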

\begin{proof}
For $j=1$ the statements are Lemma~\ref{one_case_first} taking into 
consideration \eqref{mild_cond_L}. For $j>1$,
 due to the self-adjointness of the eigenproblem \eqref{emrho},
it is enough to
apply again Lemma~\ref{one_case_first} by fixing $\tilde{\L}=\L \ominus 
\operatorname{Span}\{ u^-_1(t),\ldots,u^-_{j-1}(t) \}$ as trial spaces. 
Note that the negative eigenvalues of $(\mathrm{Z}^{\tilde{\L}}_t)$ are those of
\eqref{emrho} except for $\tau_1^-(t),\ldots,\tau_{j-1}^-(t)$.
\end{proof}

A neat procedure for finding certified spectral bounds for $A$, as
described in \cite{ZM95}, can now be deduced from Theorem~\ref{more_useful}. 
By virtue of Proposition~\ref{maybe_useless} and Remark~\ref{op_tis}, this
procedure turns out to be optimal in the context of the approximated counting
functions discussed in Section~\ref{basic_method}, see 
\cite[Section~6]{davies-plum}. 
We summarize the core statement as follows.

\begin{cor} \label{corollary_4}
For all $t\in \RR$ and $j\in \{1,\ldots,
n^\pm(t)\}$,
\begin{equation}   \label{encl_gen}
    t+\frac{1}{\tau_j^-(t)}\leq \nea_j^-(t) \qquad \text{and} \qquad
    \nea_j^+(t) \leq t+\frac{1}{\tau_j^+(t)}.
\end{equation}
\end{cor}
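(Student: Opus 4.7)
The plan is to derive the corollary as an almost immediate consequence of Theorem~\ref{more_useful} and Proposition~\ref{maybe_useless}. I will focus on the lower-bound inequality; the upper-bound inequality then follows by invoking Assumption~\ref{plusminus} and replacing $A$ by $-A$.

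First, fix $t\in\R$ and $j\in\{1,\ldots,n^-(t)\}$. Since $\tau_j^-(t)<0$, the real number $s:=t+\frac{1}{2\tau_j^-(t)}$ satisfies $s<t$. Theorem~\ref{more_useful} applied at index $j$ then tells us that
\[
F^j_\L(s)=-\frac{1}{2\tau_j^-(t)}=t-s.
\]
In particular, $s$ satisfies the hypothesis $F^j_\L(s)\leq t-s$ of the top implication in \eqref{enclosure_mult} of Proposition~\ref{maybe_useless}, taking $t^-=s$.

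Applying that implication yields
\[
s-F^j_\L(s)\leq \nea_j^-(t).
\]
Substituting the explicit value of $s$ and of $F^j_\L(s)$ gives
\[
s-F^j_\L(s)=\left(t+\frac{1}{2\tau_j^-(t)}\right)-\left(-\frac{1}{2\tau_j^-(t)}\right)=t+\frac{1}{\tau_j^-(t)},
\]
which is exactly the left inequality in \eqref{encl_gen}. The corresponding inequality for $\nea_j^+(t)$ is obtained by the symmetric argument flagged in Assumption~\ref{plusminus}.

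I do not expect any real obstacle here, since both ingredients have already been established: Theorem~\ref{more_useful} identifies the optimal shift $s=\hat{t}^-_j$ characterized by the fixed-point condition \eqref{GLj:fixedpoint}, and Proposition~\ref{maybe_useless} converts any point satisfying $F^j_\L(s)\leq t-s$ into a certified lower enclosure. The only minor point worth checking is that Assumption~\ref{asu2} (namely $\L\cap \E_t(A)=\{0\}$) is in force so that the expression $\frac{1}{\tau_j^-(t)}$ makes sense, which is fine since it was adopted globally.
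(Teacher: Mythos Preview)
Your proof is correct and follows exactly the route indicated in the paper: combine Theorem~\ref{more_useful}, which identifies the shift $s=t+\frac{1}{2\tau_j^-(t)}$ satisfying $F^j_\L(s)=t-s$, with the enclosure implication in Proposition~\ref{maybe_useless} (and Remark~\ref{op_tis} for optimality). The paper does not spell out the argument beyond citing these ingredients, so your write-up is essentially the intended proof made explicit.
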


In recent years, numerical techniques based on this statement have been designed 
to successfully compute eigenvalues for the radially reduced 
magnetohydrodynamics operator \cite{ZM95,BouStr:2011man}, the Helmholtz equation 
\cite{Behnke:2001p2871} and the calculation of sloshing frequencies in the left 
definite case \cite{Behnke:2009p3097}. We will explore the case of the Maxwell 
operator in sections~\ref{section_maxwell}.


\section{Convergence and error estimates}
\label{ConverFn}\label{subsec_ConverFn_2}

Our first goal in this section will be to show that,
if $\L$ captures an eigenspace of $A$
within a certain order of precision $\mathcal{O}(\varepsilon)$ as
specified below, then the bounds which follow from
Proposition~\ref{maybe_useless}  are 
\begin{enumerate}
\item \label{convergence_a} at least within $\mathcal{O}(\varepsilon)$ from the
true spectral data for any $t\in \R$,
\item \label{convergence_b} within $\mathcal{O}(\varepsilon^2)$ for $t\not\in
\spec(A)$.
\end{enumerate}
This will be the content of theorems~\ref{linear_order} and 
\ref{thm:approx_square}, and Corollary~\ref{cor_th11}.
We will then show that, in turns, the estimates \eqref{encl_gen} have always 
residual of size
$\mathcal{O}(\varepsilon^2)$ for any $t\in \R$. See  Theorem~\ref{convergence}.
In the spectral approximation literature this property is known as optimal order
of convergence/exactness, see \cite[Chapter~6]{1983Chatelin} or 
\cite{1974Weinberger}.

Recall Remark~\ref{on_eves}, and the assumptions~\ref{asu1} and~\ref{asu2}. 
Below  $\{\phi_j^t\}_{j=1}^m$ 
denotes an orthonormal set of eigenvectors of
$\E_{[t-\dnea_m(t),t+\dnea_m(t)]}(A)$ which is ordered so that
\[
      |A-t|\phi_j^t=\dnea_j(t) \phi_j^t \qquad \text{for} \qquad j=1,\ldots, m.
\]
Whenever $0<\varepsilon_j<1$ is small, as specified below, the trial subspace
$\L\subset \dom(A)$ will be assumed to be close to
$\operatorname{Span}\{\phi_j^t\}_{j=1}^m$ in the sense that there exist
$w_j^t\in \L $  such that
\begin{align} \label{cond_approx_0} \tag{A$_0$}
\|w_j^t-\phi_j^t\|&\leq\varepsilon_j  \qquad \text{and} \\
\label{cond_approx_1} \tag{A$_1$}
|w_j^t-\phi_j^t|_t&\leq \varepsilon_j .
\end{align} 
We have split this condition into two, in order to highlight the fact
that some times only \eqref{cond_approx_1} is required. Unless otherwise
specified, the index $j$ runs from $1$~to~$m$.

From \eqref{cond_J} it follows that the family 
$\{\phi_j^s\}_{j=1}^m\subset\E_{[t-\dnea_m(t),t+\dnea_m(t)]}(A)$ 
and the family  $\{w_j^s\}_{j=1}^m\subset \L$ above can always be chosen 
piecewise constant for
$s$ in a neighbourhood of $t$. Moreover, they can be chosen so that jumps only
occur at $s\in\spec(A)$. 

\begin{assumption} 
\label{asu3} Without further mention all $t$-dependant vectors below will be 
assumed to be locally constant in $t$
with jumps only at the spectrum of $A$.
\end{assumption}

A set $\{w_j^t\}_{j=1}^m$ subject to
\eqref{cond_approx_0}-\eqref{cond_approx_1} is not generally orthonormal.
However, according to the next lemma, it can always be substituted by an 
orthonormal set,  provided $\varepsilon_j$ is
small enough. 

\begin{lemma} \label{onb_approx}
There exists a constant $C>0$ independent of $\L$ ensuring the following. If
$\{w_j^t\}_{j=1}^m\subset \L $ is such that
\eqref{cond_approx_0}-\eqref{cond_approx_1} hold for all $\varepsilon_j$ such 
that \[\varepsilon=\sqrt{\sum_{j=1}^m\varepsilon_j^2}<\frac1{\sqrt{m}},\] then 
there
is a set $\{v_j^t\}_{j=1}^m\subset \L$  orthonormal in the inner product
$\langle \cdot,\cdot\rangle$ such that
\[
       |v_j^t-\phi_j^t|_t + \|v_j^t-\phi_j^t\|< C \varepsilon.
\]  
\end{lemma}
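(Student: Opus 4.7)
The plan is to orthonormalize the family $\{w_j^t\}_{j=1}^m$ by Löwdin's symmetric procedure, that is, to set $v_j^t = \sum_{k=1}^m (G^{-1/2})_{jk}\, w_k^t$, where $G \in \mathbb{C}^{m\times m}$ is the Gram matrix $G_{jk}=\langle w_j^t,w_k^t\rangle$. Expanding $w_j^t = \phi_j^t + (w_j^t-\phi_j^t)$ and using orthonormality of the $\phi_j^t$ together with \eqref{cond_approx_0} gives
\[
|G_{jk}-\delta_{jk}|\leq \varepsilon_j+\varepsilon_k+\varepsilon_j\varepsilon_k,
\]
so $\|G-I\|\leq \|G-I\|_{F} \leq C_1 \sqrt m\, \varepsilon$, with $C_1$ an absolute constant. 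The hypothesis $\varepsilon<1/\sqrt m$ can be adjusted (at the cost of enlarging $C$) so that $\|G-I\|<1$, ensuring that $G$ is positive definite and that $G^{-1/2}$ is well-defined via functional calculus. Standard perturbation estimates for the square root then yield $\|G^{-1/2}-I\|\leq C_2\,\varepsilon$, with $C_2$ independent of $\L$.

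By construction, $\langle v_j^t,v_i^t\rangle = (G^{-1/2}\, G\, G^{-1/2})_{ji}=\delta_{ji}$, so $\{v_j^t\}$ is orthonormal in $\L$. To estimate the deviation, decompose
\[
v_j^t-\phi_j^t = (v_j^t-w_j^t)+(w_j^t-\phi_j^t).
\]
The second term has norm $\leq\varepsilon_j\leq \varepsilon$ in both $\|\cdot\|$ and $|\cdot|_t$ by \eqref{cond_approx_0}-\eqref{cond_approx_1}. For the first, write $v_j^t-w_j^t=\sum_k\bigl((G^{-1/2})_{jk}-\delta_{jk}\bigr)w_k^t$ and combine $\|G^{-1/2}-I\|\leq C_2\varepsilon$ with the bounds $\|w_k^t\|\leq 1+\varepsilon_k$ and
\[
|w_k^t|_t\leq |w_k^t-\phi_k^t|_t+|\phi_k^t|_t\leq \varepsilon_k+\dnea_m(t).
\]
This yields $\|v_j^t-w_j^t\|\leq C_3\,\varepsilon$ and $|v_j^t-w_j^t|_t\leq C_4\,\varepsilon$, where $C_3,C_4$ depend only on $m$ and $\dnea_m(t)$ and not on the trial space. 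Adding the contributions gives the claim with $C=C_3+C_4+2$.

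The main obstacle, such as it is, is more bookkeeping than substance: one must verify that every constant appearing in the bounds depends only on $m$, on $t$ and on intrinsic spectral data of $A$ (specifically $\dnea_m(t)$, which controls $|\phi_k^t|_t$), but never on $\L$ itself. This is precisely why the estimate for $|v_j^t-w_j^t|_t$ is routed through $\dnea_m(t)$ rather than through a quantity like $\max_k|w_k^t|_t$ that could depend on $\L$. Once this is carefully tracked, the smallness condition $\varepsilon<1/\sqrt{m}$ is exactly what is needed to keep $G$ positive definite and the symmetric orthogonalization well-behaved.
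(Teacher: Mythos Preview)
Your approach is the same as the paper's: both apply the symmetric (L\"owdin) orthonormalization $v_j=\sum_k(G^{-1/2})_{jk}\,w_k$ and then control $v_j-w_j$ in both norms via $\|G^{-1/2}-I\|$ together with $|w_k|_t\le\varepsilon_k+\dnea_k(t)$, exactly as you do. The one loose step is your invertibility argument: the Frobenius bound $\|G-I\|_F\le C_1\sqrt m\,\varepsilon$ combined with $\varepsilon<1/\sqrt m$ only gives $\|G-I\|_F<C_1$, so the phrase ``can be adjusted at the cost of enlarging $C$'' does not actually establish that $G$ is positive definite over the full range of $\varepsilon$ allowed by the hypothesis. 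The paper fixes this cleanly by noting that $\langle G\underline v,\underline v\rangle=\big\|\sum_k v_k w_k\big\|^2\ge(1-\varepsilon)^2\|\underline v\|^2$, which yields $G>0$ and $\|G^{-1/2}\|\le(1-\varepsilon)^{-1}$ for every $\varepsilon<1$; with that in place, your argument goes through unchanged.
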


\begin{proof}
As it is clear from the context, in  this proof we suppress the index $t$ on
top of any vector. The desired conclusion is achieved by applying the 
Gram-Schmidt procedure.  Let 
$G=[\langle w_k,w_l\rangle]_{kl=1}^m \in \mathbb{C}^{m\times m}$ 
be the Gram matrix associated to $\{w_j\}$.
Set
$$v_j =\sum_{k=1}^m(G^{-1/2})_{kj}\;w_k.$$
Then
\begin{align*}
 \|G-I\|&\leq\sqrt{\sum_{k,l=1}^m |\langle w_k,w_l\rangle-\langle
\phi_k,\phi_l\rangle|^2}\\
&\leq\sqrt{2\sum_{k,l=1}^m
\|w_k-\phi_k\|^2(\|w_l\|+\|\phi_l\|)^2}\\
&\leq
\sqrt{2}(2+\varepsilon)\varepsilon.
\end{align*}

Since
\begin{align*}
    \|v_j-w_j\|^2 & = \left\|\sum_{k=1}^m(G^{-1/2}-I)_{kj}\;w_k\right\|^2\\
    &= \sum_{k,l=1}^m(G^{-1/2}-I)_{kj}
\overline{(G^{-1/2}-I)_{lj}}\langle w_k, w_{l} \rangle\\
&= \sum_{k=1}^m (G^{-1/2}-I)_{kj} \overline{\left( \sum_{l=1}^m 
G_{kl}(G^{-1/2}-I)_{lj}\right)}\\
&= \sum_{k=1}^m (G^{-1/2}-I)_{kj}(G^{1/2}-G)_{jk}\\
&= \left( (I-G^{1/2})^2\right)_{jj}
\end{align*}
then
\begin{equation*}\label{GS-1}
 \|v_j-w_j\|\le \|I-G^{1/2}\|.
\end{equation*}
As $G^{1/2}$ is a positive-definite matrix, for every $\vecv \in\mathbb{C}^m$ we 
have
\[ \|(G^{1/2}+I)\vecv\|^2=\|G^{1/2}\vecv\|^2+2\langle G^{1/2}\vecv,\vecv \rangle 
+\|\vecv\|^2 \ge \|\vecv\|^2. \]
Then  $\det(I+G^{1/2})\not=0$ and $\|(I+G^{1/2})^{-1}\| \le 1$. Hence
\begin{equation} \label{GS-2}
\|v_j-w_j\| \leq \|(I-G)(I+G^{1/2})^{-1}\| \le \|I-G\|\,\|(I+G^{1/2})^{-1}\|\le 
(2+\varepsilon)\varepsilon\,.
\end{equation}

Now, identify $\vecv=(v_1,\ldots,v_m)\in{\mathbb C}^m$ with $v=\sum_{k=1}^m 
v_k\phi_k$. As
\[
  \|G^{1/2} \vecv\| =\left\|\sum_{j=1}^m \langle v, \phi_j\rangle w_j 
\right\|\geq \|v\| - \left\|\sum_{j=1}^m \langle v, \phi_j\rangle 
(w_j-\phi_j)\right\| \geq (1-\varepsilon)\|\vecv\|
 \]
then
 \[
  \|G^{-1/2}\| \leq \frac{1}{1-\varepsilon}.
\]
Hence 
\begin{align}
|v_j-w_j|_t &\leq \sum_{k=1}^m|(G^{-1/2}-I)_{jk}||w_k|_t  \nonumber\\
&\leq \sum_{k=1}^m|(G^{-1/2}-I)_{jk}|(\varepsilon_k+\dnea_k(t)) \nonumber\\
&\leq
\sum_{k,l=1}^m|(G^{-1/2})_{kl}||(G^{1/2}-I)_{lj}|(\varepsilon_k+\dnea_k(t)) 
\nonumber\\
&\leq\frac{\sqrt{m}(\varepsilon+\dnea_m(t))
(2+\varepsilon)}{1-\varepsilon}\varepsilon. \label{GS-3}
\end{align}
The desired conclusion follows from \eqref{GS-2} and \eqref{GS-3}.
\end{proof}

\subsection{Convergence of the approximated local counting function}
The next theorem addresses the claim \ref{convergence_a} made at the beginning
of this section. According to Lemma~\ref{onb_approx},  
in order to examine the asymptotic behaviour of $F^j_\L(t)$ as $\varepsilon_j\to
0$ under the constraints
\eqref{cond_approx_0}-\eqref{cond_approx_1}, we can assume without loss of
generality that the trial vectors $w_j^t$ form an orthonormal set
in the inner product $\langle\cdot,\cdot\rangle$.

\begin{theorem} \label{linear_order}
Let $\{w_j^t\}_{j=1}^m\subset \L$ be a family of vectors
which is orthonormal in the inner product $\langle\cdot,\cdot \rangle$ and
satisfies \eqref{cond_approx_1}.
Then
\[
       F_{\L}^j(t)-\dnea_j(t) \leq \left( \sum_{k=1}^j \varepsilon_k^2 \right)
^{1/2}\qquad \quad \forall j=1,\ldots,m.
\] 
\end{theorem}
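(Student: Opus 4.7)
The plan is to use the min-max definition \eqref{defoffl} of $F_\L^j(t)$ with a single well-chosen test subspace built from the approximating vectors. Specifically, I would take
\[
V=\operatorname{Span}\{w_1^t,\ldots,w_j^t\}\subset\L.
\]
Since $\{w_k^t\}_{k=1}^m$ is orthonormal with respect to $\langle\cdot,\cdot\rangle$, the $j$ vectors $w_1^t,\ldots,w_j^t$ are linearly independent, so $\dim V=j$; moreover, every $u\in V$ admits the expansion $u=\sum_{k=1}^j a_k w_k^t$ with $\|u\|^2=\sum_{k=1}^j |a_k|^2$. By \eqref{defoffl} it will be enough to show that
\[
\frac{|u|_t}{\|u\|}\leq \dnea_j(t)+\left(\sum_{k=1}^j\varepsilon_k^2\right)^{1/2}\qquad\forall u\in V\setminus\{0\}.
\]

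To establish this pointwise bound I would split $(A-t)w_k^t=(A-t)\phi_k^t+(A-t)(w_k^t-\phi_k^t)$ and apply the triangle inequality to $|u|_t=\|(A-t)u\|$, separating a spectral piece from an error piece. For the spectral piece, the key observation is that even though $\phi_k^t$ need not itself be an eigenvector of $A$ (cf.\ Remark~\ref{on_eves}), the identity $|A-t|\phi_k^t=\dnea_k(t)\phi_k^t$ squares to $(A-t)^2\phi_k^t=\dnea_k(t)^2\phi_k^t$. Using self-adjointness of $A-t$ together with orthonormality of $\{\phi_k^t\}_{k=1}^m$, I get
\[
\langle(A-t)\phi_k^t,(A-t)\phi_l^t\rangle=\langle\phi_k^t,(A-t)^2\phi_l^t\rangle=\dnea_k(t)^2\delta_{kl},
\]
so that
\[
\left\|\sum_{k=1}^j a_k(A-t)\phi_k^t\right\|^2=\sum_{k=1}^j|a_k|^2\dnea_k(t)^2\leq \dnea_j(t)^2\|u\|^2,
\]
where the last inequality uses the monotonicity $\dnea_k(t)\leq \dnea_j(t)$ for $k\leq j$.

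For the error piece, I would apply the triangle inequality once more, then invoke hypothesis \eqref{cond_approx_1} and the Cauchy-Schwarz inequality in $\mathbb{C}^j$:
\[
\left\|\sum_{k=1}^j a_k(A-t)(w_k^t-\phi_k^t)\right\|\leq \sum_{k=1}^j|a_k|\,|w_k^t-\phi_k^t|_t \leq \sum_{k=1}^j|a_k|\,\varepsilon_k\leq \left(\sum_{k=1}^j\varepsilon_k^2\right)^{1/2}\|u\|.
\]
Combining the two estimates yields the claimed pointwise bound on $|u|_t/\|u\|$, and taking the maximum over $u\in V$ followed by the outer minimum in the definition of $F_\L^j(t)$ delivers the inequality. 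The argument is mostly bookkeeping; the only subtlety worth flagging is the passage from $|A-t|\phi_k^t=\dnea_k(t)\phi_k^t$ to the orthogonality relation $\langle(A-t)\phi_k^t,(A-t)\phi_l^t\rangle=\dnea_k(t)^2\delta_{kl}$, which is precisely what lets one avoid assuming the $\phi_k^t$ are eigenvectors of $A$ itself. Notably, hypothesis \eqref{cond_approx_0} is never used in this argument.
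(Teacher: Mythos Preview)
Your proof is correct and follows essentially the same line as the paper's: both pick the test subspace $V=\operatorname{Span}\{w_1^t,\ldots,w_j^t\}$, split via the triangle inequality for $|\cdot|_t$ into a ``spectral'' term $\big|\sum c_k\phi_k^t\big|_t\leq\dnea_j(t)$ and an ``error'' term bounded by $(\sum\varepsilon_k^2)^{1/2}$ through Cauchy--Schwarz. Your write-up is slightly more explicit in justifying the identity $q_t(\phi_k^t,\phi_l^t)=\dnea_k(t)^2\delta_{kl}$ via $(A-t)^2\phi_l^t=\dnea_l(t)^2\phi_l^t$, and your observation that \eqref{cond_approx_0} is unused is accurate.
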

\begin{proof}  From the min-max principle we obtain
\begin{align*}
      F_{\L}^j(t)&\leq \max_{\sum |c_k|^2=1} \left|\sum_{k=1}^j c_k w_k
\right|_t \\
      &\leq \max_{\sum |c_k|^2=1} \left|\sum_{k=1}^j c_k (w_k-\phi_k) \right|_t+
      \max_{\sum |c_k|^2=1} \left|\sum_{k=1}^j c_k \phi_k\right|_t \\
      &= \max_{\sum |c_k|^2=1} \left|\sum_{k=1}^j c_k
(w_k-\phi_k)\right|_t+\dnea_j(t).
\end{align*}
This gives
\begin{align*}
    F_{\L}^j(t)-\dnea_j(t)&\leq 
    \max_{\sum |c_k|^2=1} \sum_{k=1}^j |c_k| |w_k-\phi_k|_t \\
    &\leq   \max_{\sum |c_k|^2=1} \left(\sum_{k=1}^j |c_k|^2\right)^{1/2}
    \left( \sum_{k=1}^j |w_k-\phi_k|_t^2 \right)^{1/2} \leq \left( \sum_{k=1}^j
\varepsilon_k^2 \right) ^{1/2}
\end{align*}
as needed.
\end{proof}

In terms of order of approximation, Theorem~\ref{linear_order} will be 
superseded by Theorem~\ref{thm:approx_square} for $t\not\in \spec(A)$. However, 
if $t\in \spec(A)$, 
the trial space $\L$ can be chosen so that $F_{\L}^1(t)-\dnea_1(t)$ is linear in 
$\varepsilon_1$. Indeed, fixing any non-zero $u\in\dom(A)$ and 
$\L=\operatorname{Span}\{u\}$, yields
$F^1_{\L}(t)-\dnea_1(t)=F^1_{\L}(t)=\varepsilon_1$.
This shows that Theorem~\ref{linear_order} is optimal, upon the presumption that 
$t$  is arbitrary.

The next theorem addresses the claim \ref{convergence_b} made at the beginning
of this section.  Its proof is reminiscent of that of
\cite[Theorem~6.1]{1973Strang}.

\begin{theorem} \label{thm:approx_square}
Let $t\not\in\spec(A)$. Suppose that the $\varepsilon_j$ in
\eqref{cond_approx_1} are such that
\begin{equation}\label{epsilonk-chico}
\sum_{j=1}^m \varepsilon_j^2<\frac{\dnea_1(t)^2}{6}.
\end{equation}
Then,  
\begin{equation} \label{quadratic_conv}
    F^j_\mathcal{L}(t)-\dnea_j(t)\leq 3\frac{\dnea_j(t)}{\dnea_1(t)^2 }
\sum_{k=1}^j \varepsilon_k^2
    \qquad  \forall j=1,\ldots,m. 
\end{equation}
\end{theorem}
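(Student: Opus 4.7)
The plan is to apply the Rayleigh--Ritz/Strang paradigm to the non-negative operator $(A-t)^2$, whose $j$-th eigenvalue is $\dnea_j(t)^2$ with eigenvector $\phi_j^t$. By the min-max formula applied to the trial subspace $V_j=\operatorname{Span}\{w_1^t,\ldots,w_j^t\}$, which has dimension $j$ because $\{w_k^t\}$ is orthonormal in $\langle\cdot,\cdot\rangle$, one has $F^j_\L(t)^2 \leq \max_{u\in V_j,\,\|u\|=1}|u|_t^2$. For any such unit vector $u=\sum_{k=1}^j c_k w_k^t$ with $\sum|c_k|^2 = 1$, I decompose $u = v' + e$ with $v' = \sum c_k \phi_k^t$ and $e = \sum c_k(w_k^t - \phi_k^t)$.

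The key Strang-type step is the Pythagorean identity $|u|_t^2 = |Pu|_t^2 + |(I-P)u|_t^2$, where $P$ is the orthogonal projection onto $\operatorname{Span}\{\phi_k^t\}_{k=1}^j$. This holds because $P$ commutes with $(A-t)^2$: its range is invariant under $|A-t|$ by the spectral structure guaranteed by Assumption~\ref{asu1}. Since $|A-t|^2 \leq \dnea_j(t)^2$ on the range of $P$, the first term satisfies $|Pu|_t^2 \leq \dnea_j(t)^2 \|Pu\|^2 \leq \dnea_j(t)^2$. For the second term, $Pv' = v'$ yields $(I-P)u = (I-P)e$, and applying the same Pythagorean decomposition to $e$ gives $|(I-P)e|_t \leq |e|_t$. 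A Cauchy--Schwarz step combining \eqref{cond_approx_1} with $\sum|c_k|^2 = 1$ gives $|e|_t^2 \leq \sum_{k=1}^j \varepsilon_k^2$, and consequently
\[
F^j_\L(t)^2 \leq \dnea_j(t)^2 + \sum_{k=1}^j \varepsilon_k^2.
\]

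The stated bound then follows by factoring $F^j_\L(t)-\dnea_j(t) = (F^j_\L(t)^2-\dnea_j(t)^2)/(F^j_\L(t)+\dnea_j(t))$, using the min-max lower bound $F^j_\L(t)+\dnea_j(t)\geq 2\dnea_j(t)$, and absorbing the constant via $1/(2\dnea_j(t))\leq 3\dnea_j(t)/\dnea_1(t)^2$, which holds because $\dnea_1(t)\leq\dnea_j(t)$. The smallness hypothesis \eqref{epsilonk-chico} does not appear essential for the inequality itself, but keeps $F^j_\L(t)$ comparable to $\dnea_j(t)$ so the whole statement remains in the regime where the Pythagorean estimate is informative. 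The main technical point to watch is that $P$ commutes with $(A-t)^2$ but not necessarily with $A-t$ itself (consistent with Remark~\ref{on_eves}, since the $\phi_k^t$ need only be eigenvectors of $|A-t|$); hence the orthogonal decomposition must be phrased at the level of the quadratic form $q_t$ rather than of the operator $A-t$ directly.
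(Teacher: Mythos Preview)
Your argument is correct and considerably more direct than the paper's, but it rests on one assumption not present in the statement of Theorem~\ref{thm:approx_square}: that the $\{w_k^t\}_{k=1}^m$ are orthonormal in $\langle\cdot,\cdot\rangle$. The theorem hypothesises only \eqref{cond_approx_1}. The remark preceding Theorem~\ref{linear_order} says orthonormality may be arranged ``without loss of generality'' via Lemma~\ref{onb_approx}, but that lemma also needs \eqref{cond_approx_0} and replaces the individual $\varepsilon_k$ by a single $C\varepsilon$, so the precise constant $3$ would not survive that reduction. If you are content with an unspecified constant (or if you add orthonormality to the hypotheses, as Theorem~\ref{linear_order} does explicitly), your proof is complete and in fact delivers the sharper intermediate bound $F_\L^j(t)^2-\dnea_j(t)^2\leq\sum_{k=1}^j\varepsilon_k^2$, with \eqref{epsilonk-chico} genuinely unused.

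The paper proceeds in the opposite direction: rather than projecting the trial vectors onto the eigenspace, it projects the eigenvectors $\phi_k^t$ onto $\L$ via the $q_t$-orthogonal projection $P_\L$, takes $P_\L\E_j$ (with $\E_j=\operatorname{Span}\{\phi_k^t\}_{k=1}^j$) as the test subspace in the min-max, and controls the norm defect through an auxiliary quantity $\mu_\L^j(t)\leq \tfrac{3}{\dnea_1(t)^2}\sum_k\varepsilon_k^2$. The smallness hypothesis~\eqref{epsilonk-chico} is used there to force $\mu_\L^j(t)<\tfrac12$, which both guarantees $\dim P_\L\E_j=j$ and enables the bound $F_\L^j(t)^2\leq\dnea_j(t)^2/(1-\mu_\L^j(t))$. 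Your route sidesteps all of this: the spectral projection $P$ onto $\E_j$ commutes with $(A-t)^2$ automatically, and $\dim V_j=j$ comes from orthonormality rather than from~\eqref{epsilonk-chico}. The trade-off is that the paper's argument works under \eqref{cond_approx_1} alone with no structural assumption on the $w_k^t$, whereas yours needs them orthonormal or requires the detour through Lemma~\ref{onb_approx}.
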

\begin{proof}
Since $t\not\in \spec(A)$, then $(\dom(A),q_t(\cdot,\cdot))$ is a Hilbert space.
Let $P_\L:\dom(A)\longrightarrow \L$ be the orthogonal projection onto $\L$ with
respect to the inner product $q_t(\cdot,\cdot)$, so that
\[
      q_t(u-P_{\L}u,v)=0 \qquad \forall v\in \L.
\]
Then $|u|_t^2=|P_\L u|_t^2+|u-P_\L u|_t^2$ for all $u\in \dom(A)$ and
$|u-P_{\L}u|_t \leq |u-v|_t$ for all $v\in\L$. Hence
\begin{equation} \label{cond_approx_2}
|\phi_k-P_{\L} \phi_k|_t \leq \varepsilon_k \qquad \qquad \forall k=1,\ldots,m.
\end{equation}

Let $\E_j=\mathrm{Span}\big\{\phi_k\}_{k=1}^j$.
Define
\begin{align*}
     \mathcal{F}_j&=\{\phi \in \E_j  : \|\phi\|=1 \big\} \qquad \text{and} \\
     \mu_{\L}^j(t)&=\max_{\phi \in \mathcal{F}_j} \left | 2 \Re \langle \phi,
\phi-P_{\L} \phi\rangle -\|\phi-P_{\L}\phi \|^2
     \right|.
\end{align*}
Here $\mu_{\L}^j$ depends on $t$, as $P_\L$ does. We first show that, under
hypothesis \eqref{epsilonk-chico}, $\mu_\L^j(t)<\frac{1}{2}$. Indeed,
 given $\phi\in \mathcal{F}_j$ we decompose it as $\phi=\sum_{k=1}^j c_k
\phi_k$. Then
\begin{align}
    |\langle \phi,\phi-P_\L \phi \rangle | &=
    \left|\sum_{k=1}^j c_k \langle \phi_k, \phi-P_\L \phi \rangle \right| 
    =\left|\sum_{k=1}^j \frac{c_k}{\dnea_k(t)^2} q_t(\phi_k, \phi-P_\L \phi) 
    \right|  \nonumber\\
    &=\left|q_t\left( \sum_{k=1}^j \frac{c_k}{\dnea_k(t)^2}\phi_k, \phi-P_\L
\phi\right) \right|  \nonumber \\
    &= \left|q_t\left( \sum_{k=1}^j \frac{c_k}{\dnea_k(t)^2}(\phi_k-P_\L\phi_k),
\phi-P_\L \phi\right)\right|  \nonumber \\
    &\leq \left|  \sum_{k=1}^j \frac{c_k}{\dnea_k(t)^2}(\phi_k-P_\L\phi_k)
\right|_t\, \left|  \sum_{k=1}^j c_k(\phi_k-P_\L\phi_k) \right|_t.
\label{auxi-0}
\end{align} 
For each multiplying term in the latter expression, the triangle and 
Cauchy-Schwarz's 
inequalities yield (take $\alpha_k=c_k$ or $\alpha_k=\frac{c_k}{\dnea_k(t)^2}$)
\begin{align}
       \left| \sum_{k=1}^j \alpha_k (\phi_k-P_\L \phi_k) \right|_t
 &\leq \sum_{k=1}^j |\alpha_k|\,  |\phi_k-P_\L \phi_k|_t \nonumber\\
 &\leq \left(\sum_{k=1}^j |\alpha_k|^2\right)^{1/2} 
       \left( \sum_{k=1}^j |\phi_k-P_\L \phi_k|_t^2 \right)^{1/2}.\label{auxi-1}
\end{align}
Then
\begin{equation}
\begin{aligned}
 \left| 2\Re  \langle\phi,\phi-P_\L \phi \rangle\right|  &\leq
2\left(\sum_{k=1}^j \frac{|c_k|^2}{\dnea_k(t)^4}\right)^{1/2}
 \left(\sum_{k=1}^j |c_k|^2\right)^{1/2} \sum_{k=1}^j \varepsilon_k^2  \\
 & \leq \frac{2}{\dnea_1(t)^2}\sum_{k=1}^j \varepsilon_k^2\label{cota-1}
\end{aligned}
\end{equation}
for all $\phi\in \mathcal{F}_j$. 

The other term in the expression for $\mu_\L^j(t)$ has an upper bound found as
follows. According to the min-max principle
\begin{equation}
\|\phi-P_\L\phi\|^2 \le \frac{1}{\dnea_1(t)^2}q_t(\phi-P_\L\phi,\phi-P_\L\phi).
\end{equation}
Therefore, by repeating analogous steps as in \eqref{auxi-0} and \eqref{auxi-1}, 
we
get
\begin{align}
\|\phi-P_\L\phi\|^2 &\le \frac{1}{\dnea_1(t)^2}\sum_{k=1}^jc_k
q_t(\phi_k-P_\L\phi_k,\phi-P_\L\phi) \nonumber\\
&= q_t\left( \sum_{k=1}^j
\frac{c_k}{\dnea_1(t)^2}(\phi_k-P_\L\phi_k),\phi-P_\L\phi\right)\nonumber\\
&= q_t\left( \sum_{k=1}^j
\frac{c_k}{\dnea_1(t)^2}(\phi_k-P_\L\phi_k),\sum_{l=1}
^jc_l(\phi_l-P_\L\phi_l)\right)\nonumber\\
&\le \frac{1}{\dnea_1(t)^2}\sum_{k=1}^j \varepsilon_k^2\,.\label{cota-2}
\end{align}
Hence, from \eqref{cota-1} and \eqref{cota-2}, 
\begin{equation}\label{bound_on_mu_L}
\mu_\L^j(t)\le \frac{3}{\dnea_1(t)^2}\sum_{k=1}^j\varepsilon_k^2 < \frac{1}{2}
\end{equation}
as a consequence of \eqref{epsilonk-chico}. 

Next, observe that $\dim (P_\L  \E_j) = j$. Indeed $P_\L \psi=0$
for $\|\psi\|=1$ would imply 
\[
      \mu_\L ^j(t)\geq \left | 2 \Re \langle \psi, \psi-P_{\L} \psi\rangle
-\|\psi-P_{\L}\psi \|^2
     \right| =\|\psi\|^2=1,
\] 
which would contradict the fact that $\mu_\L ^j(t)<1$. Then,
\[
      F^j_\L (t)^2\leq \max _{u\in P_\L \E_j}\frac{|u|_t^2}{\|u\|^2} 
      =\max_{\phi\in \E_j}\frac{|P_\L \phi|_t^2}{\|P_\L \phi\|^2}
      =\max_{\phi\in \mathcal{F}_j}\frac{|P_\L \phi|_t^2}{\|P_\L \phi\|^2}.
\]
As 
\[
       \|P_\L\phi\|^2=\|\phi\|^2-2\Re \langle \phi,\phi-P_\L \phi \rangle
+\|\phi-P_\L\phi \|^2 \geq
       1-\mu^j_\L (t), 
\]
we get
\begin{equation} \label{bound_on_F}
     F_\L^j(t)^2\leq \max _{\phi\in \mathcal{F}_j}
\frac{|\phi|_t^2}{1-\mu_\L^j(t)}=
     \max _{\sum |c_k|^2=1} \frac{\sum_{k=1}^j |c_k|^2
\dnea_k(t)^2}{1-\mu_\L^j(t)}=\frac{\dnea_j(t)^2}{1-\mu_\L^j(t)}.
\end{equation}
Finally, \eqref{bound_on_F} and \eqref{bound_on_mu_L} yield
\begin{align}
F_\L^j(t)^2-\dnea_j(t)^2 &\le \frac{\mu_\L^j(t)}{1-\mu_\L^j(t)}\dnea_j(t)^2
\nonumber\\
&\le 2 \mu_\L^j(t)\dnea_j(t)^2\nonumber\\
&\le 2\frac{3}{\dnea_1(t)^2}\dnea_j(t)^2\sum_{k=1}^j\varepsilon_k^2.
\end{align}
The proof is completed by observing that $F_\L^j(t)+\dnea_j(t)\ge 2\dnea_j(t).$
\end{proof}

As the next corollary shows, a quadratic order of decrease for 
$F^j_\L(t)-\dnea_j(t)$ is prevented for $t\in \spec(A)$ in the context of 
theorems~\ref{linear_order} and \ref{thm:approx_square},  only for $j$ up to 
$\dim \E_{t}(A)$.

\begin{cor}\label{cor_th11}
 Let $t\in\spec_{\disc}(A)$, $\ell=1+\dim \E_{t}(A)$ and $k\in 
\{\ell,\ldots,m\}$. Let
 \[
   \alpha_k(t)= \frac14 \min\left\{|\dnea_l(t)-\dnea_{l-1}(t)|: \dnea_l(t)\not= 
\dnea_{l-1}(t), \, l=\ell,...,k\right\}>0.
\]
There exists $\varepsilon>0$ independent of $k$ ensuring the following. If  
\eqref{cond_approx_1} holds true for $\sqrt{\sum_{j=1}^m 
\varepsilon_j^2}<\varepsilon$, then
\[
     F^k_\mathcal{L}(t)-\dnea_k(t)\leq
3\frac{\dnea_k(t)}{\alpha_k(t)^2} \sum_{j=1}^k \varepsilon_j^2.
\]
\end{cor}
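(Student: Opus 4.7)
The plan is to reduce to the orthogonal complement of the eigenspace at $t$ and then invoke Theorem~\ref{thm:approx_square} there. The key difficulty is that $q_t$ is degenerate on $\E_t(A)$ when $t\in\spec(A)$, so the $q_t$-orthogonal projection $P_\L$ used in the proof of Theorem~\ref{thm:approx_square} is no longer available.

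First, I set $\mathcal{H}^\perp=\E_t(A)^\perp$ and $A^\perp=A|_{\mathcal{H}^\perp}$. Since $t\in\spec_\disc(A)$ is isolated, $t$ belongs to the resolvent set of $A^\perp$, and the spectral distances shift by $\ell-1$: $\dnea_j^{A^\perp}(t)=\dnea_{j+\ell-1}(t)$. In particular $\dnea_1^{A^\perp}(t)=\dnea_\ell(t)\geq 4\alpha_k(t)$ by the definition of $\alpha_k(t)$, and the eigenvectors $\phi_\ell^t,\ldots,\phi_m^t$ all lie in $\mathcal{H}^\perp$ where they remain eigenvectors of $A^\perp$.

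Next, I would build a reduced trial subspace by projection: define $\tilde w_i^t:=\Pi_{\mathcal{H}^\perp}w_{i+\ell-1}^t$ for $i=1,\ldots,m-\ell+1$ and $\tilde{\L}:=\operatorname{span}\{\tilde w_i^t\}$. Because $(A-t)\Pi_{\E_t(A)}=0$, the identity $(A-t)\Pi_{\mathcal{H}^\perp}=(A-t)$ holds on $\dom(A)$, and since $\phi_{i+\ell-1}^t\in\mathcal{H}^\perp$ we obtain
\[
|\tilde w_i^t-\phi_{i+\ell-1}^t|_t=|w_{i+\ell-1}^t-\phi_{i+\ell-1}^t|_t\leq\varepsilon_{i+\ell-1}.
\]
After an orthonormalization step via Lemma~\ref{onb_approx} (the $\tilde w_i^t$ form an almost-orthonormal family, since $\|\Pi_{\E_t(A)}w_j^t\|\leq\varepsilon_j$ by~\eqref{cond_approx_0}) and choosing $\varepsilon>0$ small enough for the smallness requirement~\eqref{epsilonk-chico} of Theorem~\ref{thm:approx_square} to hold uniformly in $k$ (the threshold being controlled by $\alpha_m(t)$ and $\dnea_m(t)$), an application of Theorem~\ref{thm:approx_square} to $(A^\perp,\mathcal{H}^\perp,\tilde{\L},t)$ with index $i=k-\ell+1$ would yield
\[
F^{k-\ell+1}_{\tilde{\L}}(t;A^\perp)-\dnea_k(t)\leq\frac{3\,\dnea_k(t)}{\dnea_\ell(t)^2}\sum_{j=\ell}^{k}\varepsilon_j^2\leq\frac{3\,\dnea_k(t)}{16\,\alpha_k(t)^2}\sum_{j=1}^{k}\varepsilon_j^2.
\]

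The anticipated main obstacle is to transfer this bound from $F^{k-\ell+1}_{\tilde{\L}}(t;A^\perp)$ back to $F^k_{\L}(t;A)$. A naive min-max comparison delivers only the reverse direction $F^{k-\ell+1}_{\tilde{\L}}(t;A^\perp)\leq F^k_{\L}(t;A)$. Resolving this requires exploiting Assumption~\ref{asu2} together with the observation that the first $\ell-1$ eigenvalues of the weak problem~\eqref{weak_form} on $\L$ are of order $\varepsilon$ (by Theorem~\ref{linear_order}) while the remaining ones track those of the reduced weak problem on $\tilde{\L}$ up to a quadratic error; concretely, an optimal $(k-\ell+1)$-dimensional subspace of $\tilde{\L}$ is lifted to $\L$ via the bijection $\Pi_{\mathcal{H}^\perp}|_{\L}$ and augmented by the $\ell-1$ weak-problem eigenvectors of $q_t|_{\L}$ with smallest (near-zero) eigenvalues. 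The maximal Rayleigh quotient on the resulting $k$-dimensional test subspace of $\L$ exceeds the reduced bound by at most an $O(\sum\varepsilon_j^2/\alpha_k(t))$ contribution coming from the augmenting vectors, which is comfortably absorbed into the stated constant $3\,\dnea_k(t)/\alpha_k(t)^2$.
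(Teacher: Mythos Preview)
Your approach is genuinely different from the paper's, and the difficulty you flag in Step~4 is real and not resolved by the sketch you give. The paper bypasses the whole reduction-to-$\mathcal{H}^\perp$ machinery by a very short argument: it shifts $t$ to a nearby point $\tilde t=t+\alpha_k(t)\notin\spec(A)$ (the definition of $\alpha_k(t)$ is exactly what makes $\tilde t$ land in a spectral gap while keeping $t+\dnea_k(t)=\tilde t+\dnea_k(\tilde t)$), then uses the monotonicity of $s\mapsto s+F_\L^k(s)$ from \eqref{lipschitz_mult} to get
\[
F_\L^k(t)-\dnea_k(t)=\big(t+F_\L^k(t)\big)-\big(t+\dnea_k(t)\big)\le\big(\tilde t+F_\L^k(\tilde t)\big)-\big(\tilde t+\dnea_k(\tilde t)\big)=F_\L^k(\tilde t)-\dnea_k(\tilde t),
\]
and applies Theorem~\ref{thm:approx_square} directly at $\tilde t$, where $\dnea_1(\tilde t)\ge\alpha_k(t)$. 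No projection, no reduced trial space, no transfer step.

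Concerning your Step~4: the inequality $F_\L^k(t)\le F_{\L'}^k(t;A^\perp)$ with $\L'=\Pi_{\mathcal{H}^\perp}\L$ follows easily (since $|u|_t=|\Pi_{\mathcal{H}^\perp}u|_t$ and $\|u\|\ge\|\Pi_{\mathcal{H}^\perp}u\|$), but this compares the \emph{wrong} indices: on $A^\perp$ one has $\dnea_j^{A^\perp}(t)=\dnea_{j+\ell-1}(t)$, so Theorem~\ref{thm:approx_square} controls $F_{\L'}^{k-\ell+1}-\dnea_k(t)$, not $F_{\L'}^{k}-\dnea_k(t)$. Your workaround (augmenting a lifted optimal subspace by the $\ell-1$ near-kernel vectors) runs into a quantitative issue: the cross terms between the augmenting block and the lifted block in the denominator $\|u\|$ are only $O(\varepsilon)$ via \eqref{cond_approx_0}, which feeds a \emph{linear} contribution $O(\varepsilon\,\dnea_k(t)^2)$ into $F_\L^k(t)^2-\dnea_k(t)^2$ rather than a quadratic one, so the bound as stated does not follow from your sketch. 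You also invoke \eqref{cond_approx_0}, which is not part of the hypothesis of the corollary. These obstacles are not insurmountable, but they require substantially more work than you indicate, and the paper's $t\mapsto\tilde t$ shift avoids them entirely.
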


\begin{proof} 
Without loss of generality we assume that $t+\dnea_k(t)\in\spec(A)$. Otherwise 
$t-\dnea_k(t)\in\spec(A)$ and the proof is analogous to the one presented 
below. 

Let $\tilde{t}=t+\alpha_k(t)$. Then   $\tilde{t}\not\in \spec(A)$ and
$t+\dnea_k(t)=\tilde{t}+\dnea_k(\tilde{t})$. Since the map $s\mapsto 
s+F_\L^j(s)$ is non-decreasing as a consequence of Proposition 
\ref{maybe_useless},  Theorem~\ref{thm:approx_square} applied at $\tilde{t}$ 
yields
\begin{align*}
F_\L^k(t)-\dnea_k(t) &= t+F_\L^k(t)-(t+\dnea_k(t)) \le 
\tilde{t}+F_\L^k(\tilde{t})-(\tilde{t}+\dnea_k(\tilde{t}))\\
&= F_\L^k(\tilde{t})-\dnea_k(\tilde{t}) \le 
3\frac{\dnea_k(\tilde{t})}{\dnea_1(\tilde{t})^2}\sum_{j=1}^k\varepsilon_k^2\le  
3
\frac{\dnea_k(t)}{\alpha_k(t)^2}\sum_{j=1}^k\varepsilon_j^2
\end{align*}
as needed.
\end{proof}

\subsection{Convergence of local bounds for eigenvalues}
For the final part of this section, we formulate precise statements on the 
convergence of the method described in Section~\ref{zime_sec}. 
Theorem~\ref{convergence} below improves upon two crucial aspects of a
similar result established in \cite[Lemma~2]{BouStr:2011man}. It allows $j>1$ 
and  it allows $t\in \spec(A)$. These two improvements are essential in order to 
obtain sharp bounds for those
eigenvalues which are either degenerate or form a tight
cluster. 

\begin{remark} \label{on_order_when_in_spectrum}
The constants $\tilde{\varepsilon}_t$ and $C_t^\pm$ below do have a dependence 
on $t$ that may be determined explicitly from Theorem~\ref{thm:approx_square}, 
Corollary~\ref{cor_th11} and the proof of Theorem~\ref{convergence}. Despite of 
the fact that they can deteriorate as $t$ approaches the isolated eigenvalues of 
$A$ and they can have jumps precisely at these points, they may be chosen 
locally independent of $t$ in compacts outside the spectrum. \end{remark}

Set
\begin{align*}
    \nu_j^-(t)&=\sup\{ s<t : \tr \1_{(s,t)}(A)\geq j\} \\
    \nu_j^+(t)&=\inf\{ s>t : \tr \1_{(t,s)}(A)\geq j\}. 
\end{align*}
Note that these are the spectral points of $A$ which are strictly to the left 
and strictly to the right of
$t$ respectively. The inequality
$\nu_j^\pm(t)\not= \nea^\pm_j(t)$ only occurs when $t$ is an eigenvalue. In view 
of \eqref{delta_plus_minus},
 $\delta^{\pm}(t)=|t-\nu_1^\pm(t)|$.

\begin{remark}   \label{rq:SpecDistZM}
By virtue of Corollary~\ref{corollary_4} and Corollary~\ref{prop_useless_in_spectrum},  $\frac{1}{\tau_j^-(t)}\leq \nu_j^-(t)-t$ and $\frac{1}{\tau_j^+(t)}\geq \nu_j^+(t)-t$. Then 
\[
 \hat{t}^-_j=t+\frac{1}{2\tau_j^-(t)}\leq \frac{t+\nu_j^-(t)}{2}
\leq\frac{\nu_j^+(t)+\nu_j^-(t)}{2}
\leq\frac{\nu_j^+(t)+t}{2}\leq t+\frac{1}{2\tau_j^+(t)} =\hat{t}^+_j.
\]
\end{remark}

The following is one of the main results of this paper.

 \begin{theorem} \label{convergence}
 Let $J\subset\R$ be a bounded open segment such that $J\cap \spec(A)\subseteq 
\spec_\disc(A)$. 
 Let $\{\phi_k\}_{k=1}^{\tilde{m}}$ be a family of eigenvectors of $A$ such that 
$\operatorname{Span}\{\phi_k\}_{k=1}^{\tilde{m}}=\E_J(A)$.  For fixed 
$t\in J$,  there exist constants $\tilde{\varepsilon}_t>0$ and $C^{-}_t>0$ independent of 
the trial space $\mathcal{L}$, ensuring the following. If there are 
$\{w_j\}_{j=1}^{\tilde{m}}\subset \mathcal{L}$ such that
\begin{equation} \label{cond_approx_Z-M} 
\left(\sum_{j=1}^{\tilde{m}}  \|w_j-\phi_j\|^2+|w_j-\phi_j|_t^2\right)^{1/2} 
\leq \varepsilon <\tilde{\varepsilon}_t,
\end{equation}
then
\begin{align*}
   0<  \nu^-_j(t)-\left(t+\frac{1}{\tau_j^-(t)}\right) \le
C_t^- \varepsilon^2
\end{align*}
for all $j \leq n^-(t)$ such that $\nu^-_j(t)\in J$. 
\end{theorem}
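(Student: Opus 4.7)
The plan is to reduce the bound to a quadratic convergence estimate for the approximated counting function $F_\L^j$, as already established in Theorem~\ref{thm:approx_square} and Corollary~\ref{cor_th11}, by exploiting the fixed-point characterization of $\tau_j^-(t)$ in Theorem~\ref{more_useful}. Setting
\[
\hat s := t + \tfrac{1}{2\tau_j^-(t)}\quad\text{and}\quad s_0 := \tfrac{t+\nu_j^-(t)}{2},
\]
so that $F_\L^j(\hat s) = t - \hat s$ and $\dnea_j(s_0) = t - s_0 = s_0 - \nu_j^-(t)$, a direct computation yields the key identity
\[
\nu_j^-(t) - \Bigl(t + \tfrac{1}{\tau_j^-(t)}\Bigr) = 2(s_0 - \hat s),
\]
while Remark~\ref{rq:SpecDistZM} gives $\hat s \leq s_0$. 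Everything therefore reduces to proving $0 < s_0 - \hat s \leq \tfrac{1}{2}C_t^-\varepsilon^2$.

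First I would localize $\hat s$ in the interval $(\nu_j^-(t), s_0]$ for $\varepsilon$ below a threshold $\tilde\varepsilon_t$. By \eqref{lipschitz_mult} the map $s \mapsto s + F_\L^j(s)$ is non-decreasing, and Theorem~\ref{linear_order} combined with \eqref{cond_approx_Z-M} shows that $F_\L^j \to \dnea_j$ uniformly on compact subsets of $J$ as $\varepsilon \to 0$; since the unique solution of $s + \dnea_j(s) = t$ in a neighbourhood of $s_0$ is $s_0$ itself, this yields $\hat s \to s_0$. Once $\hat s \in (\nu_j^-(t), s_0]$, a geometric inspection shows that the $j$ spectral points of $A$ closest to $\hat s$ (counted with multiplicity) are precisely $\nu_1^-(t), \ldots, \nu_j^-(t)$, so that $\dnea_j(\hat s) = \hat s - \nu_j^-(t)$ and
\[
F_\L^j(\hat s) - \dnea_j(\hat s) = (t-\hat s) - (\hat s - \nu_j^-(t)) = 2(s_0 - \hat s).
\]

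Next I would apply Theorem~\ref{thm:approx_square}, or Corollary~\ref{cor_th11} if $\hat s$ happens to coincide with a point of $\spec_\disc(A)$, at the shifted base point $\hat s$. The approximation hypothesis \eqref{cond_approx_1} at $\hat s$ follows from its counterpart at $t$ via the triangle inequality
\[
|w_k-\phi_k|_{\hat s} \leq |w_k-\phi_k|_t + |t-\hat s|\,\|w_k-\phi_k\| \leq (1+|J|)\,\varepsilon,
\]
after which Lemma~\ref{onb_approx}, preceded by a re-labelling of $\{\phi_k\}$ in increasing order of $|\lambda_k-\hat s|$, supplies an orthonormal replacement family. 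The theorem then delivers
\[
F_\L^j(\hat s) - \dnea_j(\hat s) \leq 3\,\frac{\dnea_j(\hat s)}{\dnea_1(\hat s)^2}(1+|J|)^2\,\varepsilon^2 =: C_t^-\,\varepsilon^2,
\]
which, combined with the identity from the localization step, closes the upper bound. The strict inequality $\hat s < s_0$ can be argued from Proposition~\ref{Prop:Generalization}: equality would force $F_\L^j(\hat s) = \dnea_j(\hat s)$ and hence $\L$ to contain an exact eigenvector of $A$ at $\nu_j^-(t)$ or $t$, which can be excluded by shrinking $\tilde\varepsilon_t$.

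The hard part will be the book-keeping around two distinct eigenvector orderings: the hypothesis provides $\{\phi_k\}_{k=1}^{\tilde m}$ spanning $\E_J(A)$ with approximants $\{w_k\}\subset\L$ controlled in the $t$-norm, while Theorem~\ref{thm:approx_square} requires the data re-ordered with respect to $|A-\hat s|$ and orthonormalized in $\langle\cdot,\cdot\rangle$. The localization step is essential to guarantee that the $j$ spectral points of $A$ closest to $\hat s$ coincide with those closest to $s_0$, namely $\nu_1^-(t), \ldots, \nu_j^-(t)\in J$, so that suitable approximants are indeed available in the given family; it also yields the uniform lower bounds on $\dnea_1(\hat s)$ and $\dnea_j(\hat s)$ needed to package the prefactors into a single $C_t^-$ depending only on $t$, in the spirit of Remark~\ref{on_order_when_in_spectrum}.
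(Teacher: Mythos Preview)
Your overall strategy coincides with the paper's: localize $\hat s = t + \tfrac{1}{2\tau_j^-(t)}$, identify
\[
\nu_j^-(t) - \Bigl(t+\tfrac{1}{\tau_j^-(t)}\Bigr) = F_\L^j(\hat s) - \dnea_j(\hat s),
\]
and then invoke Theorem~\ref{thm:approx_square} or Corollary~\ref{cor_th11} at $\hat s$. The transfer of \eqref{cond_approx_1} from $t$ to $\hat s$ via $|w_k-\phi_k|_{\hat s}\le |w_k-\phi_k|_t+|t-\hat s|\|w_k-\phi_k\|$ is a clean explicit step; the paper handles this only implicitly through Assumption~\ref{asu3}.

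There is, however, a genuine error in your localization step. The claim ``once $\hat s\in(\nu_j^-(t),s_0]$, the $j$ spectral points closest to $\hat s$ are $\nu_1^-(t),\ldots,\nu_j^-(t)$ and $\dnea_j(\hat s)=\hat s-\nu_j^-(t)$'' is false on that interval. Take $j=2$, $\nu_2^-(t)=0$, $\nu_1^-(t)=1$, $t=1.1$, so $s_0=0.55$; for $\hat s=0.1\in(0,0.55]$ one gets $\dnea_2(\hat s)=|\hat s-\nu_1^-(t)|=0.9$, not $\hat s-\nu_2^-(t)=0.1$. The identity you need holds only for $\hat s\in\bigl[\tfrac{\nu_1^-(t)+\nu_j^-(t)}{2},\,s_0\bigr]$, and this is precisely the interval the paper isolates, by showing that $g(\alpha)=F_\L^j(t+\alpha)+\alpha$ is negative at $\alpha=\tfrac{\nu_1^-(t)+\nu_j^-(t)}{2}-t$ once the uniform bound \eqref{conse-uniform} is in force. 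Your argument that $\hat s\to s_0$ would eventually place $\hat s$ in this smaller interval, so the defect is reparable; but the interval you wrote down does not support the geometric claim, and the softer ``unique fixed point of $s+\dnea_j(s)=t$'' reasoning is delicate when $t\in\spec(A)$.

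Your appeal to Proposition~\ref{Prop:Generalization} for the strict inequality $\hat s<s_0$ is also misplaced: that proposition characterizes saturation of the Lipschitz bound $|F_\L^j(t)-F_\L^j(s)|=|t-s|$ on a segment, not the pointwise equality $F_\L^j(\hat s)=\dnea_j(\hat s)$, and the latter does not by itself force $\L$ to contain an exact eigenvector. The paper does not go through Proposition~\ref{Prop:Generalization} here; the non-strict bound $\hat s\le s_0$ comes from Remark~\ref{rq:SpecDistZM}, and Assumption~\ref{asu2} is what guarantees $F_\L^j(t)>0$.
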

\begin{proof}  
The hypotheses ensure that the number of indices $j\leq n^-(t)$ such that 
$\nu^-_j(t)\in J$
never exceeds $\tilde{m}$. Therefore this condition in the conclusion of the 
theorem is consistent. 

Let
\[
     m(t)=\max\{m\in \mathbb{N}:[t-\dnea_m(t),t+\dnea_m(t)]\subset J\}.
\]
The hypothesis on $\L$ guarantees that  
\eqref{cond_approx_0}-\eqref{cond_approx_1} hold true for $m=m(t)$ 
and with
$\left(\sum_{j=1}^{m}\varepsilon_j^2\right)^{1/2}
<\varepsilon$.  
By combining  Lemma~\ref{onb_approx} and Theorem~\ref{linear_order} 
and the 
fact that  
we can pick $\{w^t_j\}_{j=1}^{m(t)}\subseteq\{w_k\}_{k=1}^{\tilde{m}}$, 
there exists $\tilde{\varepsilon}_t>0$
small enough,  such that  \eqref{cond_approx_Z-M} yields
\begin{equation}\label{conse-uniform}
     F^j_\L(s)-\dnea_j(s)\leq {\frac{t-\nu_1^-(t)}{2}} \qquad \qquad \forall
j=1,\ldots,\tilde{m}\quad \text{and} \quad s\in J.
\end{equation}

Let $j$ be such that $\nu^-_j(t)\in J$.
Since $ \nu_j^-(t)-(\alpha+t) \le (t+\alpha)- \nu_1^-(t) $ for all 
 $\alpha$ such that $\frac{\nu_j^-(t)+\nu_1^-(t)}{2}-t\le \alpha 
\le 0$, then
\[
    \dnea_j(s)=s-\nu_j^-(t) \qquad \forall  s\in 
\left[\frac{\nu_1^-(t)+\nu_j^-(t)}{2},\frac{t+\nu_j^-(t)}{2}\right].
\]
Let 
\[
     g(\alpha )= F_\L^j(t+\alpha)+\alpha.
\]
Then $g$ is an increasing function of $\alpha$ and $g(0)=F_\L^j(t)>0$. For the 
strict inequality in the latter, recall Assumption~\ref{asu2}.
Moreover, according to \eqref{conse-uniform} 
\begin{align*}
g\left(\frac{\nu_j^-(t)+\nu_1^-(t)}{2}-t\right) 
&=  F_\L^j\left( \frac{\nu_j^-(t)+\nu_1^-(t)}{2}\right) -t+\nu_1^-(t) - 
\frac{\nu_1^-(t)-\nu_j^-(t)}{2} \\
&= F_\L^j\left( \frac{\nu_j^-(t)+\nu_1^-(t)}{2}\right)-t+\nu_1^-(t) - 
\dnea_j\left( \frac{\nu_j^-(t)+\nu_1^-(t)}{2}\right) \\
&\le   \frac{t-\nu_1^-(t)}{2} -(t-\nu_1^-(t)) < 0\,.
\end{align*}
Hence, the Mean Value Theorem ensures the existence of $\tilde{\alpha}\in 
\left(\frac{\nu_1^-(t)+\nu_j^-(t)}{2}-t,0\right)$ such
that $\tilde{\alpha}=F_\L^j(t+\tilde{\alpha})$. According to 
Theorem~\ref{more_useful}, $\tilde{\alpha}$ is unique and 
$\tilde{\alpha}=\frac{1}{2\tau_j^-(t)}$. 

The proof is now completed as follows. {By virtue of Remark 
\ref{rq:SpecDistZM},} 
\[
\hat{t}_j^-(t)=t+\frac{1}{2\tau_j^-(t)}\in 
\left(\frac{\nu_1^-(t)+\nu_j^-(t)}{2}, \frac{t+\nu_j^-(t)}{2}\right)
\quad \text{and} \quad F_\L^j(\hat{t}_j^-(t))=\frac{1}{2\tau_j^-(t)}.
\]
Then, Theorem~\ref{thm:approx_square} or
Corollary~\ref{cor_th11}, as appropriate, ensure the existence of $C_t^->0$ 
yielding
\[
\nu_j^-(t)-\left(t+\frac{1}{\tau_j^-(t)}\right) ={F_\L^j}(\hat{t}
_j^-)-\dnea_j(\hat{t}^-_j)\leq 
C_t^-\sum_{k=1}^j\varepsilon_k^2<C_t^- \varepsilon^2\,,
\]
as needed.
\end{proof}

\subsection{Convergence for eigenfunctions} We conclude this section with a 
statement on convergence of eigenfunctions.

\begin{cor}    \label{conv_eigenfunctions}
Let $J\subset\R$ be a bounded open segment such that $J\cap \spec(A)\subseteq 
\spec_\disc(A)$. 
 Let $\{\phi_k\}_{k=1}^{\tilde{m}}$ be a family of eigenvectors of $A$ such that 
$\operatorname{Span}\{\phi_k\}_{k=1}^{\tilde{m}}=\E_J(A)$.  For fixed $t\in J$, 
there exist constants $\tilde{\varepsilon}_t>0$ and $C^{\pm}_t>0$ independent of 
the trial space $\mathcal{L}$, ensuring the following.  If there are 
$\{w_j\}_{j=1}^{\tilde{m}}\subset \mathcal{L}$ guaranteeing
the validity of \eqref{cond_approx_Z-M}, for all $j\le n^\pm(t)$ such that 
$\nu_j^\pm(t)\in J$ we can find 
$\psi_j^{\varepsilon\pm}\in\E_{\{\nu_j^-(t),\nu_j^+(t)\}}(A)$ satisfying
\[
    |u_j^\pm(t)-\psi_j^{\varepsilon\pm}|_t+ 
\|u_j^\pm(t)-\psi_j^{\varepsilon\pm}\|\leq
C_t^\pm \varepsilon.
\]
\end{cor}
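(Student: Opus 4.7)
The strategy is to reduce the claim to an application of Proposition \ref{eigenfunctions} at the shifted sampling point $\hat{t}_j^\pm := t + \tfrac{1}{2\tau_j^\pm(t)}$, and then convert the resulting $|\cdot|_{\hat{t}_j^\pm}$-estimate into a $|\cdot|_t$-estimate. By Assumption \ref{plusminus}, it suffices to treat the ``$-$'' case. Fix $j\le n^-(t)$ with $\nu_j^-(t)\in J$. By Theorem \ref{more_useful}, the normalization $\tilde{u}_j := u_j^-(t)/\|u_j^-(t)\|$ is precisely the $j$-th eigenvector of the non-negative weak problem \eqref{weak_form} at $s=\hat{t}_j^-$ with eigenvalue $F_\L^j(\hat{t}_j^-)=t-\hat{t}_j^-$; that is, $\tilde{u}_j$ plays the role of $u_j^{\hat{t}_j^-}$ in the notation of Proposition \ref{eigenfunctions}.

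The quadratic-in-$\varepsilon$ input required by Proposition \ref{eigenfunctions} is extracted from Theorem \ref{convergence}. Since $\hat{t}_j^-\in\big(\tfrac{\nu_1^-(t)+\nu_j^-(t)}{2},\tfrac{t+\nu_j^-(t)}{2}\big)$, the $j$ closest spectral points to $\hat{t}_j^-$ are $\nu_1^-(t),\ldots,\nu_j^-(t)$ and $\dnea_j(\hat{t}_j^-)=\hat{t}_j^--\nu_j^-(t)$. The identity $F_\L^j(\hat{t}_j^-)-\dnea_j(\hat{t}_j^-)=\nu_j^-(t)-\bigl(t+\tfrac{1}{\tau_j^-(t)}\bigr)$ combined with Theorem \ref{convergence} yields $F_\L^j(\hat{t}_j^-)^2-\dnea_j(\hat{t}_j^-)^2=O(\varepsilon^2)$. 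Because $J\cap\spec(A)\subseteq\spec_\disc(A)$ and $\hat{t}_j^-$ stays uniformly away from $\spec(A)\setminus\{\nu_1^-(t),\ldots,\nu_j^-(t)\}$ as $\varepsilon\to 0$, each gap denominator $\delta_k(\hat{t}_j^-)^2-\dnea_k(\hat{t}_j^-)^2$ ($k=1,\dots,j$) stays bounded below by a positive constant, so the inductively defined residuals $\varepsilon_k$ of Proposition \ref{eigenfunctions} are all $O(\varepsilon)$.

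Proposition \ref{eigenfunctions} applied at $\hat{t}_j^-$ then produces an eigenvector $\phi_j^{\hat{t}_j^-}\in\E_{\{\hat{t}_j^--\dnea_j(\hat{t}_j^-),\,\hat{t}_j^-+\dnea_j(\hat{t}_j^-)\}}(A)$ with $\|\tilde{u}_j-\alpha\phi_j^{\hat{t}_j^-}\|+|\tilde{u}_j-\alpha\phi_j^{\hat{t}_j^-}|_{\hat{t}_j^-}=O(\varepsilon)$, where $\alpha=\langle\tilde{u}_j,\phi_j^{\hat{t}_j^-}\rangle$; we set $\psi_j^{\varepsilon-}:=\alpha\phi_j^{\hat{t}_j^-}$. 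The first endpoint of the bracket is $\nu_j^-(t)$; the second, $2\hat{t}_j^--\nu_j^-(t)$, tends to $t$ as $\varepsilon\to 0$, so for small $\varepsilon$ it is either outside $\spec(A)$ -- placing $\psi_j^{\varepsilon-}\in\ker(A-\nu_j^-(t))\subset\E_{\{\nu_j^-(t),\nu_j^+(t)\}}(A)$ -- or it coincides with the nearest spectral point to the right of $t$, which by continuity can only be $\nu_j^+(t)$. The corresponding $|\cdot|_t$ bound follows from the elementary inequality $|v|_t\le|v|_{\hat{t}_j^-}+|t-\hat{t}_j^-|\,\|v\|$ together with the fact that $|t-\hat{t}_j^-|=F_\L^j(\hat{t}_j^-)$ is uniformly bounded; passing from $\tilde{u}_j$ back to $u_j^-(t)$ by the prefactor $\|u_j^-(t)\|$ completes the argument.

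The main obstacle is Step~2: ensuring a uniform-in-$\varepsilon$ positive lower bound for the gap denominators $\delta_k(\hat{t}_j^-)^2-\dnea_k(\hat{t}_j^-)^2$ feeding into Proposition \ref{eigenfunctions}. This is where the discreteness hypothesis $J\cap\spec(A)\subseteq\spec_\disc(A)$ is indispensable, and the required control is directly analogous to the local-in-$t$ non-degeneracy noted in Remark \ref{on_order_when_in_spectrum}.
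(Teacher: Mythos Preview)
Your approach is the same as the paper's: identify $u_j^\pm(t)$ with the eigenvector $u_j^{\hat t_j^\pm}$ of the weak problem \eqref{weak_form} at the shifted point $\hat t_j^\pm$ via Theorem~\ref{more_useful}, and then feed the quadratic estimate on $F_\L^j(\hat t_j^\pm)-\dnea_j(\hat t_j^\pm)$ (ultimately coming from Theorem~\ref{thm:approx_square} or Corollary~\ref{cor_th11}) into Proposition~\ref{eigenfunctions}. Your write-up is in fact more explicit than the paper's, since you spell out the seminorm conversion $|\cdot|_{\hat t_j^-}\to|\cdot|_t$ and the location of $\psi_j^{\varepsilon-}$.

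One small correction in your discussion of where $\psi_j^{\varepsilon-}$ lives. You write that the right endpoint $2\hat t_j^--\nu_j^-(t)$ ``coincides with the nearest spectral point to the right of $t$, which by continuity can only be $\nu_j^+(t)$''. This alternative never occurs: by Remark~\ref{rq:SpecDistZM} one has $\hat t_j^-\le\tfrac{t+\nu_j^-(t)}{2}$, hence $2\hat t_j^--\nu_j^-(t)\le t<\nu_j^+(t)$, so the right endpoint lies in $(\nu_1^-(t),t]$. Since $J\cap\spec(A)$ is discrete and $2\hat t_j^--\nu_j^-(t)\to t$ with strict inequality for generic $\varepsilon$, for $\varepsilon$ small enough the right endpoint is simply outside $\spec(A)$, and $\psi_j^{\varepsilon-}\in\E_{\nu_j^-(t)}(A)\subset\E_{\{\nu_j^-(t),\nu_j^+(t)\}}(A)$. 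Your conclusion is therefore correct, but the justification of the second branch should be dropped. (Note also that Proposition~\ref{eigenfunctions} needs $F_\L^k(\hat t_j^-)^2-\dnea_k(\hat t_j^-)^2=O(\varepsilon^2)$ for \emph{all} $k\le j$, not only $k=j$; this follows directly from Theorem~\ref{thm:approx_square} at $\hat t_j^-\notin\spec(A)$, which is what the paper cites.)
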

\begin{proof}
Fix $t\in J$. By virtue of Theorem~\ref{more_useful},
$
        u^\pm_j(t)=u^{\hat{t}^\pm_j}_j
$
in the notation for eigenvectors employed in Proposition~\ref{eigenfunctions}.
The claimed conclusion is a consequence of the latter combined with 
Theorem~\ref{thm:approx_square} or
Corollary~\ref{cor_th11}, as appropriate.
\end{proof}


\section{The finite element method for the Maxwell eigenvalue problem}  
\label{section_maxwell}
Let $\Omega\subset \RR^3$  be a polyhedron which is open, bounded, simply 
connected and Lipschitz
 in the sense of \cite[Notation~2.1]{ABDG98}. Let  $\bomega$ be the boundary of 
$\Omega$ and denote by $\nn$ its outer normal vector.  The physical phenomenon 
of electromagnetic oscillations in a resonator filled with a homogeneous medium 
is described by
the isotropic Maxwell eigenvalue problem,
 \begin{equation} \label{maxwell}
\left\{
\begin{aligned}
 & \curl \bE = i\omega \bH & \text {in }\Omega \\
 & \curl \bH = -i\omega \bE  & \text{in }\Omega  \\
 & \bE\times\nn =0& \text{on } \bomega.
 \end{aligned} \right.
\end{equation}
Here the angular frequency $\omega\in \R$ and the field phasor $(\bE,\bH) 
\not=0$ is restricted to the solenoidal subspace,  characterized by the Gauss 
law
\begin{equation}   \label{ansatz_div}
\div (\bE)=0=\div (\bH). 
\end{equation}
The orthogonal complement of this subspace is the gradient space, which has 
infinite dimension and it lies in the kernel of the eigenvalue equation 
\eqref{maxwell}. In turns, this means that \eqref{maxwell}-\eqref{ansatz_div} 
and the unrestricted problem \eqref{maxwell}, have the same non-zero spectrum 
and the same corresponding eigenspace.

Let
\[
   \sobol(\curl;\Omega)=\left\{\bu \in [L^2(\Omega)]^3 : \curl \bu \in
[L^2(\Omega)]^3 \right\}
\]
equipped with the norm
\begin{equation} \label{norm_curl}
    \| \bu \|_{\curl,\Omega}^2=\|\bu\|_{0,\Omega}^2+\|\curl\bu\|_{0,\Omega}^2.
\end{equation}
Let $\mathcal{R}_{\max}$  denote the  operator defined by
the expression ``$\curl$'' acting on the domain
$\dom(\mathcal{R}_{\max})=\sobol(\curl;\Omega)$, the maximal domain.
Let
\[\mathcal{R}_{\min}=\mathcal{R}_{\max}^*=\overline{\mathcal{R}_{\max}
\!\upharpoonright \! [\D(\Omega)]^3}.\] The domain of $\mathcal{R}_{\min}$ is 
\begin{align*}
\dom(\mathcal{R}_{\min})&= \sobol_0(\curl;\Omega) \\
& = \{ \bu \in \sobol(\curl;\Omega) :\langle \curl \bu,\bv\rangle_\Omega =
\langle \bu,\curl \bv \rangle
_\Omega \quad \forall \bv \in \sobol(\curl;\Omega)\}.
\end{align*}
By virtue of Green's identity for the rotational 
\cite[Theorem~I.2.11]{1986Giraultetal}, 
\[
   \sobol_0(\curl;\Omega)=\{ \bu\in \sobol(\curl;\Omega):  \bu\times\nn 
={\mathbf 0} \;\mathrm{on}\;\bomega  \}.
\]
The linear operator associated to \eqref{maxwell} is then,
\[
     \opmax =\begin{pmatrix} 0 & i \mathcal{R}_{\max} \\ -i \mathcal{R}_{\min}
& 0\end{pmatrix}
\]
on the domain
\begin{equation}\label{dom_per}
    \dom(\opmax)=\dom(\mathcal{R}_{\min})\times
\dom(\mathcal{R}_{\max})\subset [L^2(\Omega)]^6.
\end{equation}
Note that $\opmax:\dom(\opmax_1)\longrightarrow [L^2(\Omega)]^6$ is 
self-adjoint,
as $\mathcal{R}_{\max}$ and $\mathcal{R}_{\min}$ are mutually adjoints 
\cite[Lemma~1.2]{1990Birman}.  

The numerical estimation of the eigenfrequencies of 
\eqref{maxwell}-\eqref{ansatz_div} is known to be extremely challenging in 
general.   The operator $\mathcal{M}$  does not have a compact resolvent and it 
is strongly indefinite. The self-adjoint operator associated to 
\eqref{maxwell}-\eqref{ansatz_div} has a 
compact resolvent but it is still strongly indefinite. 
By considering the square of $\mathcal{M}$ on the solenoidal subspace, one 
obtains a positive definite eigenvalue problem (involving the bi-curl) 
which can in principle  be discretized via the Galerkin method. A serious 
drawback of this idea for practical computations is the fact that 
the standard finite element spaces are not solenoidal. Usually, spurious modes 
associated to the infinite-dimensional
kernel appear and give rise to spectral pollution. This has been well documented 
and it is known to be a manifested problem whenever the underlying mesh  is 
unstructured,  \cite{Arnold:2010p3067,Boffi-Act-Num} and references therein.   

Various ingenious methods, e.g. \cite{BG11,Bramble05,BCJ09,BFGP1999, 
Boffi-Act-Num}, 
capable of approximating the eigenvalues of \eqref{maxwell} by means of the 
finite element method have been documented in the past.  Let us apply the 
framework of Section~\ref{zime_sec} for finding eigenvalue bounds for  
$\mathcal{M}$ employing Lagrange finite elements on unstructured meshes.  
Convergence and absence of
spectral pollution are guaranteed, as a consequence of 
Corollary~\ref{corollary_4} and Theorem~\ref{convergence}.

Let $\{\mathcal{T}_h\}_{h>0}$ be a family of shape-regular triangulations of 
$\overline{\Omega}$ \cite{EG04}, where each element
$K\in {\mathcal{T}}_h$ is a simplex  with diameter $h_K$ such that 
$h=\max_{K\in{\mathcal{T}}_h}h_K$.  For $r\ge 1$, let 
\begin{align*}
    \mathbf{V}_h^r &=\{\bv_h\in [C^0(\overline{\Omega})]^3: \bv_h|_K \in
[\mathbb{P}_r(K)]^3 \ 
    \forall K\in \mathcal{T}_h \},  \\
    \mathbf{V}_{h,0}^r &=\{\bv_h\in \mathbf{V}_h^r: \bv_h\x\nn ={\mathbf 0}
\;\textrm{on}\;\bomega \}
\end{align*}
and set
\begin{equation*}\label{fe-space}
 \L_h=\mathbf{V}_{h,0}^r\x \mathbf{V}_h^r \subset 
  \dom(\opmax).
\end{equation*}
Let $ \omega_1\leq \omega_2 \leq \ldots$ be the positive eigenvalues of 
$\mathcal{M}$. The upper bounds $\omega_j^+$ and lower bounds $\omega^-_j$ 
reported below are found by fixing $t\in \mathbb{R}$, solving  (Z$_t^{\L_h}$) 
numerically, and then applying \eqref{encl_gen}. 

The only hypothesis in the analysis carried out above ensuring that the 
$\omega^\pm_j$ are close to $\omega_j$, is for the trial space to capture well 
the eigenfunctions in the graph norm of $\dom(\opmax)$. Therefore, as we have 
substantial freedom to choose these spaces and they constitute the simplest 
alternative, 
we have picked the Lagrange nodal elements. A direct application of Theorem 12 
and classical interpolation estimates e.g. \cite[Theorem~3.1.6]{1978Ciarlet}, 
leads to 
 convergence of  the approximated eigenvalues and eigenspaces. Moreover, if 
the eigenspaces are regular, then the optimal 
convergence rates of order $h^{2r}$ for eigenvalues and
$h^r$ for eigenspaces can be proved.

This regularity assumption 
on the corresponding vector spaces can be formulated in different ways 
in order to suit the chosen algorithm. For the one we have employed here, 
if we wish to obtain a lower/upper bound for the $j$-eigenvalue to the 
left/right of a fixed $t$ (and 
consequently obtain approximate eigenvectors) all the vectors of the sum of 
all eigenspaces up to $j$ have to be regular. If by some 
misfortune, an intermediate eigenspace does not fullfill this requirement, then 
the algorithm will converge slowly. To circumvent this difficulty, the 
computational procedure can be 
modified in many ways. For instance, it can be allowed to split iteratively 
the initial interval, once it is clear that some accuracy can not be achieved
after a fixed number of steps. 

\subsection{Orders of convergence on a cube}
The eigenfunctions of \eqref{maxwell} are regular in the interior of a convex 
domain. In this case, the \zime method for the resonant cavity problem 
 achieves an optimal order of convergence in the 
context of finite elements.

\begin{figure}[t]
\centerline{\includegraphics[height=8cm,
angle=0]{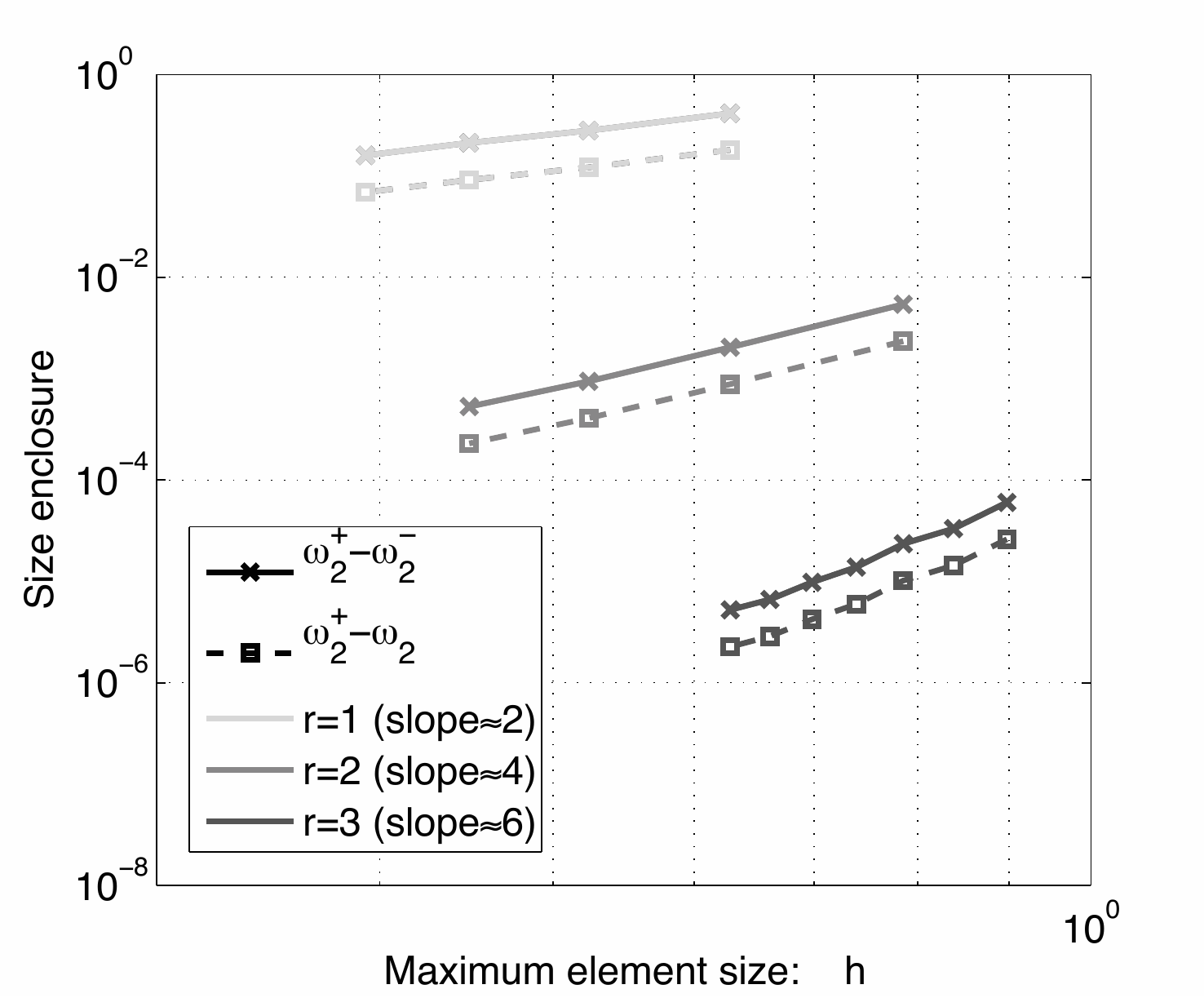}}
\caption{ Log-log graph associated to $\Omega_\cbe$ and $\omega_{2}=\sqrt{3}$.
Vertical axis:  enclosure width. Horizontal axis: maximum element size $h$. Here
we have chosen Lagrange elements of order $r=1,2,3$ on a sequence of
unstructured meshes. Here we have chosen $t=\frac{\sqrt{2}+\sqrt{3}}{2}$ the 
upper bounds and 
$t=\frac{\sqrt{3}+\sqrt{5}}{2}$ for the lower bounds.
\label{cube}
}
\end{figure}

Let $\Omega=\Omega_{\cbe}=(0,\pi)^3\subset \R^3$.  The non-zero eigenvalues
are\[\omega=\pm\sqrt{l^2+m^2+n^2}\] and the  corresponding eigenfunctions are
\[
 \bE(x,y,z)=\begin{pmatrix}   \alpha_1 \cos(lx) \sin(my) \sin(nz) \\ 
 \alpha_2 \sin(lx) \cos(my) \sin(nz) \\ \alpha_3 \sin(lx) \sin(my) \cos(nz)   
\end{pmatrix}
\qquad \forall \underline{\alpha}:=\begin{pmatrix} \alpha_1 \\ 
\alpha_2 \\ \alpha_3
\end{pmatrix}\mbox{ s.t. }
 \underline{\alpha}\cdot \begin{pmatrix} l \\ m \\ n \end{pmatrix} =0.
\]
Here $\{l,m,n\}\subset \mathbb{N}\cup \{0\}$ and not two indices are allowed to
vanish simultaneously.  The vector $\underline{\alpha}$
determines the multiplicity of the eigenvalue for a given triplet $(l,m,n)$. 
That
is, for example,
$\omega_1=\sqrt{2}$ (the first positive eigenvalue) has multiplicity 3
corresponding to indices 
$\{(1,1,0),(0,1,1),(1,0,1)\}$ each one of them contributing to one of the
dimensions of the eigenspace. However, 
$\omega_2=\sqrt{3}$ (the second positive eigenvalue) corresponding to index
$\{(1,1,1)\}$ has multiplicity 2
determined by $\underline{\alpha}$ on a plane.

In Figure~\ref{cube} we have depicted the decrease in enclosure width and exact 
residual,
\[
     \omega^+_2-\omega^-_2 \qquad \text{and} \qquad \omega^+_2-\omega_2,
\]
for the computed bounds of the eigenvalue $\omega_2=\sqrt{3}$ by means of 
Lagrange elements of order
$r=1,2,3$. In this experiment we have chosen a sequence of unstructured 
tetrahedral mesh.
The values for the slopes of the straight lines indicates that the
enclosures obey the estimate of the form
\begin{equation} \label{conjecture_optimal}
        |\omega^\pm-\omega| \le c h^{2r}, 
\end{equation}
which is indeed the optimal convergence rate.

\begin{figure}[t]
\begin{minipage}{6cm}
\includegraphics[height=6cm, angle=0]{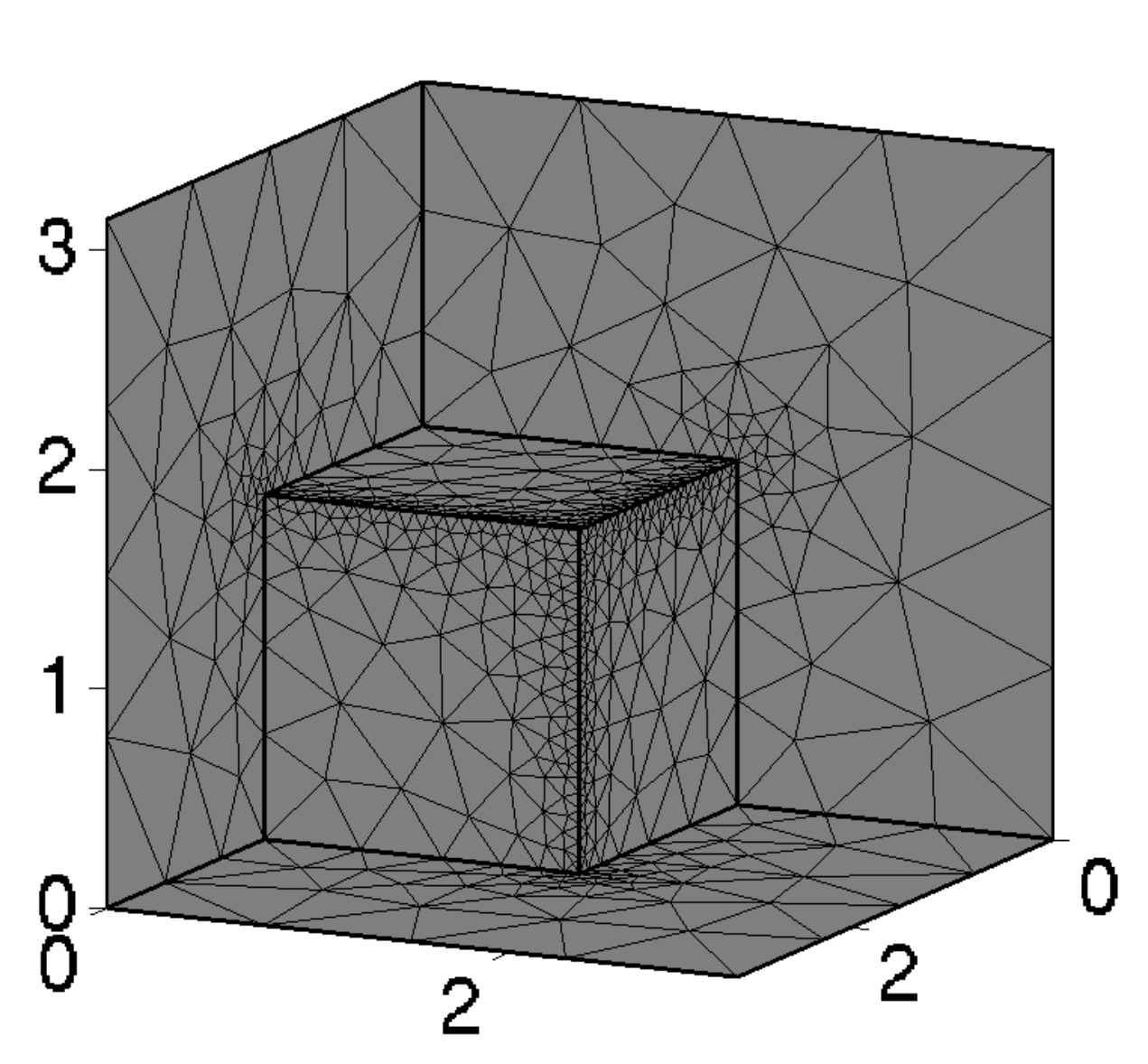}
\end{minipage} \hspace{1cm}
\begin{tabular}{c|c}
$k$  & $(\omega_k)_-^+$ \\  
\hline 
$1$  & $1.1^{46}_{25}$   \\ \hline
$2$ & $1.54^{41}_{16}$ \\
$3$  & $1.54^{41}_{18}$ \\   \hline
$4$  &  $2.0^{82}_{64}$\\
$5$  &  $2.0^{82}_{78}$  \\
$6$ & $2.0^{82}_{78}$\\   \hline
$7$  & $2.2^{35}_{13}$ \\
$8$  & $2.2^{35}_{14}$ \\   \hline
$9$ &  $2.32^{67}_{58}$\\   \hline
$10$  & $2.33^{23}_{09}$ \\
$11$  &  $2.33^{24}_{10}$\\   \hline
$12$  & $2.^{40}_{36}$\\   \hline
$13$  &  $2.^{60}_{59}$\\
$14$  &  $2.^{60}_{59}$\\
$15$  &  $2.60^{56}_{09}$\\
\hline
\end{tabular} \hspace{1cm}
\caption{Spectral enclosures for the spectrum lying on the interval
$(0,2\sqrt{2})$ for the Fichera domain $\Omega_{\lstd}$. Here we have  fixed 
$t=0.2$ to compute the upper bounds and
$t=2.8$ to compute the lower bounds.  We considered mesh refined at the 
re-entrant edges as shown on
the left. The number of DOF=208680. \label{fichera_table}}
\end{figure}

\subsection{Benchmark eigenvalue bounds for the Fichera domain}  
In this next experiment we consider the region
$\Omega=\Omega_{\lstd}=(0,\pi)^3 \setminus [0,\pi/2]^3$. Some of the eigenvalues 
can be obtained by domain decomposition and the
corresponding eigenfunctions are regular. For example, eigenfunctions on the
cube of side $\pi/2$ can be assembled in the obvious fashion, in order to build
eigenfunctions on $\Omega_{\lstd}$. Therefore the set $\{\pm
2\sqrt{l^2+m^2+n^2}\}$ where not two indices vanish simultaneously certainly 
lies
inside $\spec(\opmax)$. The first eigenvalue in this set is $2\sqrt{2}$.

We conjecture that there are exactly $15$ eigenvalues in the interval
$(0,2\sqrt{2})$.  Furthermore, we conjecture that the multiplicity counting of
the spectrum in this interval is
\[
        1,\,2,\,3,\,2,\,1,\,2,\,1,\,3.
\]
 The table on the  right of Figure~\ref{fichera_table} shows a numerical
estimation of these eigenvalues.  We have considered a
mesh refined along the re-entrant edges as shown on
the left side of this figure.  

The slight numerical discrepancy shown in the table for the seemingly multiple
eigenvalues appears to be a consequence of the fact that the meshes employed are 
not entirely
symmetric with respect to permutation of the spacial coordinates.


\pagebreak

\appendix    

\section{A Comsol v4.3 LiveLink code} \label{code}

\begin{verbatim}
%     Comsol V4.3 LiveLink code for computing
%   fundamental frequencies on a resonant cavity
%      with perfect conductivity conditions
% the test geometry below is the Fichera domain.
%
%      Gabriel Barrenechea, Lyonell Boulton
%              and Nabile Boussaid
%                                       November 2012

% INITIALIZATION OF THE MODEL FROM SCRATCHES

model = ModelUtil.create('Model');
geom1=model.geom.create('geom1', 3);
mesh1=model.mesh.create('mesh1', 'geom1');
w=model.physics.create('w', 'WeakFormPDE', 'geom1', 
              {'E1','E2', 'E3', 'H1', 'H2', 'H3'});

% CREATING THE GEOMETRY - IN THIS CASE THE FICHERA DOMAIN

hex1=geom1.feature.create('hex1', 'Hexahedron');
hex1.set('p',{'0' '0' '0' '0' 'pi' 'pi' 'pi' 'pi';
              '0' '0' 'pi' 'pi' '0' '0' 'pi' 'pi';
              '0' 'pi' 'pi' '0' '0' 'pi' 'pi' '0'});
hex2=geom1.feature.create('hex2', 'Hexahedron');
hex2.set('p',{'0' '0' '0' '0' 'pi/2' 'pi/2' 'pi/2' 'pi/2';
              '0' '0' 'pi/2' 'pi/2' '0' '0' 'pi/2' 'pi/2';
              '0' 'pi/2' 'pi/2' '0' '0' 'pi/2' 'pi/2' '0'});
dif1 = geom1.feature.create('dif1', 'Difference');
dif1.selection('input').set({'hex1'});
dif1.selection('input2').set({'hex2'});
geom1.run;

%CREATING THE GEOMETRY
model.mesh('mesh1').automatic(false);
model.mesh('mesh1').feature('size').set('custom', 'on');
model.mesh('mesh1').feature('size').set('hmax', '.8');
mesh1.run;

% PARAMETER t WHERE TO LOOK FOR EIGENVALUES
parat=2.2;

% WHETHER TO LOOK FOR THE EIGENVALUES TO THE LEFT (-) OR 
% RIGHT (+) AND WHERE ABOUT
shi=-.3;
model.param.set('tt', num2str(parat));
searchtau=shi;

% FINITE ELEMENTS TO USE AND ORDER
w.prop('ShapeProperty').set('shapeFunctionType', 'shlag');
w.prop('ShapeProperty').set('order', 3);

% PHYSICS
w.feature('wfeq1').set('weak',1 ,'(H3y-H2z)*(H3y_test-H2z_test)-
i*2*tt*(H3y-H2z)*E1_test+tt^2*E1*E1_test+(i*(H3y-H2z)-tt*E1)*E1t_test');
w.feature('wfeq1').set('weak',2 ,'(H1z-H3x)*(H1z_test-H3x_test)-
i*2*tt*(H1z-H3x)*E2_test+tt^2*E2*E2_test+(i*(H1z-H3x)-tt*E2)*E2t_test');
w.feature('wfeq1').set('weak',3 ,'(H2x-H1y)*(H2x_test-H1y_test)-
i*2*tt*(H2x-H1y)*E3_test+tt^2*E3*E3_test+(i*(H2x-H1y)-tt*E3)*E3t_test');
w.feature('wfeq1').set('weak',4 ,'(E3y-E2z)*(E3y_test-E2z_test)+
i*2*tt*(E3y-E2z)*H1_test+tt^2*H1*H1_test+((-i)*(E3y-E2z)-tt*H1)*H1t_test');
w.feature('wfeq1').set('weak',5 ,'(E1z-E3x)*(E1z_test-E3x_test)+
i*2*tt*(E1z-E3x)*H2_test+tt^2*H2*H2_test+((-i)*(E1z-E3x)-tt*H2)*H2t_test');
w.feature('wfeq1').set('weak',6 ,'(E2x-E1y)*(E2x_test-E1y_test)+
i*2*tt*(E2x-E1y)*H3_test+tt^2*H3*H3_test+((-i)*(E2x-E1y)-tt*H3)*H3t_test');

% BOUNDARY CONDITIONS
cons1=model.physics('w').feature.create('cons1', 'Constraint');
cons1.set('R', 2, 'E2');
cons1.set('R', 3, 'E3');
cons1.selection.set([1 8 9]);
cons2=model.physics('w').feature.create('cons2', 'Constraint');
cons2.set('R', 1, 'E1');
cons2.set('R', 3, 'E3');
cons2.selection.set([2 5 7]);
cons3=model.physics('w').feature.create('cons3', 'Constraint');
cons3.set('R', 1, 'E1');
cons3.set('R', 2, 'E2');
cons3.selection.set([3 4 6]);

% HOW MANY EIGENVALUES TO LOOK FOR AROUND t
neval=3;

% SOLVING THE MODEL
std1=model.study.create('std1');
model.study('std1').feature.create('eigv', 'Eigenvalue');
model.study('std1').feature('eigv').set('shift', num2str(searchtau));
model.study('std1').feature('eigv').set('neigs', neval);
std1.run;

% STORING SOLUTION FOR POST PROCESSING
[SZ,NDOFS,DATA,NAME,TYPE]= mphgetp(model,'solname','sol1');



% DISPLAYING SOLUTION
for inde=1:neval,
tauinv=(real(DATA(inde)));
bd=parat+tauinv;
if tauinv<0, disp(['lower= ',num2str(bd,10)]);
else disp(['upper= ',num2str(bd,10)]);
end
disp(['DOF= ',num2str(NDOFS)])
end
\end{verbatim}


\section*{Acknowledgements} 
We kindly thank Michael Levitin and Stefan Neuwirth for their suggestions during 
the preparation of this manuscript. We kindly thank Universit\'e de 
Franche-Comt{\'e}, University College London and the Isaac Newton Institute for 
Mathematical Sciences, for their hospitality. Funding was provided by  MOPNET, 
the British-French project PHC Alliance  (22817YA), the British Engineering and 
Physical Sciences Research Council  (EP/I00761X/1) and the French Ministry of 
Research (ANR-10-BLAN-0101).

\bibliographystyle{siam}
\def\cprime{$'$}

\end{document}